\tikzset{
>=stealth',
  punktchain/.style={
    rectangle,
    rounded corners,
    draw=black, thick,
    minimum height=3em,
    text centered,
    on chain},
  line/.style={draw, thick, <-},
  element/.style={
    tape,
    top color=white,
    bottom color=blue!50!black!60!,
    minimum width=8em,
    draw=blue!40!black!90, very thick,
    text width=10em,
    minimum height=3.5em,
    text centered,
    on chain},
  every join/.style={->, thick,shorten >=1pt},
  decoration={brace},
  tuborg/.style={decorate},
  tubnode/.style={midway, right=2pt},
}
\def\C{\ensuremath{\mathbb{C}}}
\def\P{\ensuremath{\mathbb{P}}}
\def\Q{\ensuremath{\mathbb{Q}}}
\def\R{\ensuremath{\mathbb{R}}}
\def\Z{\ensuremath{\mathbb{Z}}}
\def\alg{\mathrm{alg}}
\def\Amp{\mathrm{Amp}}
\def\ch{\mathop{\mathrm{ch}}\nolimits}
\def\Coh{\mathop{\mathrm{Coh}}\nolimits}
\def\deg{\mathop{\mathrm{deg}}}
\def\dim{\mathop{\mathrm{dim}}\nolimits}
\def\inf{\mathop{\mathrm{inf}}\nolimits}
\def\Ext{\mathop{\mathrm{Ext}}\nolimits}
\def\ext{\mathop{\mathrm{ext}}\nolimits}
\def\lExt{\mathop{\mathcal Ext}\nolimits} 
\def\Hal{H^*_{\alg}}
\def\Hilb{\mathop{\mathrm{Hilb}}\nolimits}
\def\fhilb{\mathop{\Hilb_{D,i_1,...,i_s}}\nolimits}
\def\HN{\mathop{\mathrm{HN}}\nolimits}
\def\Hom{\mathop{\mathrm{Hom}}\nolimits}
\def\im{\mathop{\mathrm{im}}\nolimits}
\def\mod{\mathop{\mathrm{mod}}\nolimits}
\def\min{\mathop{\mathrm{min}}\nolimits}
\def\Nef{\mathrm{Nef}}
\def\num{\mathop{\mathrm{num}}\nolimits}
\def\Num{\mathop{\mathrm{Num}}\nolimits}
\def\NS{\mathop{\mathrm{NS}}\nolimits}
\def\Pic{\mathop{\mathrm{Pic}}}
\def\rk{\mathop{\mathrm{rk}}}
\def\SSL{\mathop{\mathrm{SSL}}}
\def\td{\mathop{\mathrm{td}}\nolimits}
\def\v{\mathop{\pi^*v}\nolimits}
\def\MG13{\ensuremath{{\mathcal M}_{\Gamma_1(3)}}}
\def\tildeMG13{\ensuremath{\widetilde{\mathcal M}_{\Gamma_1(3)}}}
\def\into{\ensuremath{\hookrightarrow}}
\def\blank{\underline{\hphantom{A}}}
\newtheorem*{rep@theorem}{\rep@title}
\newcommand{\newreptheorem}[2]{%
\newenvironment{rep#1}[1]{%
 \def\rep@title{#2 \ref{##1}}%
 \begin{rep@theorem}}%
 {\end{rep@theorem}}}
\newtheorem{Thm}{Theorem}[section]
\newtheorem{Prop}[Thm]{Proposition}
\newtheorem{Lem}[Thm]{Lemma}
\newtheorem{thm-int}{Theorem}
\theoremstyle{definition}
\newtheorem{Def-s}[Thm]{Definition}
\newtheorem{Def}[Thm]{Definition}
\newtheorem{Rem}[Thm]{Remark}
\def\C{\ensuremath{\mathbb{C}}}
\def\P{\ensuremath{\mathbb{P}}}
\def\Q{\ensuremath{\mathbb{Q}}}
\def\R{\ensuremath{\mathbb{R}}}
\def\Z{\ensuremath{\mathbb{Z}}}
\def\EE{\ensuremath{\mathcal E}}
\def\NN{\ensuremath{\mathcal N}}
\def\OO{\ensuremath{\mathcal O}}
\def\PP{\ensuremath{\mathcal P}}
\def\Y{\ensuremath{\tilde{Y}}}
\def\MMM{\mathfrak M}
\newcommand{\ignore}[1]{}
\begin{document}

\title[Stable vector bundles on Enriques surfaces]{A note on the existence of stable vector bundles on Enriques surfaces}

\author{Howard Nuer}
\address{Department of Mathematics, Rutgers University, 110 Frelinghuysen Rd., Piscataway, NJ 08854, USA}
\email{hjn11@math.rutgers.edu}
\urladdr{http://math.rutgers.edu/~hjn11/}

\keywords{
Enriques surfaces, Stable vector bundles, Mumford-Thaddeus principle}

\subjclass[2010]{14D20, (Primary); 14J28 (Secondary)}

\begin{abstract}
We prove the non-emptiness of $M_{H,Y}(v)$, the moduli space of Gieseker-semistable sheaves on an unnodal Enriques surface $Y$ with Mukai vector $v$ of positive rank with respect to a generic polarization $H$.  This completes the chain of progress initiated by H. Kim in \cite{Kim98}.  We also show that the stable locus $M^s_{H,Y}(v)\neq\varnothing$ for $v^2>0$.  Finally, we prove irreducibility of $M_{H,Y}(v)$ in case $v^2=0$ and $v$ primitive. 
\end{abstract}

\vspace{-1em}

\maketitle

\setcounter{tocdepth}{1}
\tableofcontents

\section{Introduction}\label{sec:intro}
\subsection{Background}
Over the last few decades there has been a great deal of interest in the study of moduli spaces of coherent sheaves on smooth projective varieties, often inspired by mathematical physics and gauge theory.  In order to construct such a moduli space, in particular to obtain boundedness, one must restrict one's attention to coherent sheaves satisfying some sense of stability with fixed topological invariants, encoded in the Mukai vector $v$.  The two most ubiquitous definitions of stability are $\mu_H$-stability, or slope stability, and Gieseker-stability, both of which are defined by choosing an ample polarization $H$ on the base variety $X$.  Among the many fascinating aspects of these moduli spaces, other than their uses in physics, is the intimate connection they have with the underlying projective variety.  

A particularly tight connection with $X$ is via the choice of polarization $H\in\Amp(X)$.  As the moduli spaces $M_{H,X}(v)$ of Gieseker-semistable sheaves on $X$ with Mukai vector $v$ is constructed as a GIT (Geometric Invariant Theory) quotient with respect to $H$, varying the polarization $H$ induces a VGIT (Variation of GIT) birational transformation (as defined and studied in \cite{DH98} and \cite{Tha96}).  The corresponding connection with the birational geometry of Gieseker moduli spaces has been studied in \cite{MW97}.  Most important here is that the other birational models obtained from varying the polarization are moduli spaces as well, albeit with a slightly different moduli functor.  The so-called Mumford-Thaddeus principle studied in \cite{MW97} can be seen as an extension of the Hassett-Keel program for $\overline{M}_{g,n}$, where the minimal models of $\overline{M}_{g,n}$ obtained by running the MMP are hoped to be modular themselves.

Possibly the most studied case of these Gieseker moduli spaces is when $X$ is a smooth projective surface, and here a second important connection between the geometry of $M_{H,X}(v)$ and that of $X$ emerges.  For example, when $X$ is a projective $K3$ surface and $v$ is primitive, the moduli spaces $M_{H,X}(v)$ of Gieseker-stable sheaves on $X$ of Mukai vector $v$ are projective hyperk\"{a}hler (i.e.  irreducible holomorphic symplectic) manifolds \cite{Muk84}.  These are incredibly rare varieties with a beautiful and rigid geometry, and they are quite important as one of the building blocks of varieties with trivial $c_1$ \cite{Bea83}.  Along with the related case when $v$ is two times a primitive Mukai vector, these Gieseker moduli spaces (or their smooth resolutions in this non-primitive case) form all but two of the known deformation equivalence classes of such varieties.  The other two come instead from constructions involving Gieseker moduli spaces on Abelian surfaces.  Similarly, the Gieseker moduli spaces on rational surfaces of various types have been studied and interesting connections with the underlying surface have been unveiled.

While moduli of sheaves on $K3$ and Abelian surfaces have arguably received the most attention, the corresponding moduli spaces on the related Enriques surfaces have been much less studied.  Recall that an \emph{Enriques surface} is a smooth projective surface $Y$ with $h^1(\OO_Y)=0$ and canonical bundle $\OO(K_Y)$ a non-trivial 2-torsion element of $\Pic(Y)$.\footnote{We work over $\C$ in this paper, so this is the correct definition.}  The investigation of Gieseker moduli spaces on an Enriques surface $Y$ was started by H. Kim in \cite{Kim}, where he proved some general structure results about the locus parametrizing stable locally free sheaves, but in general, the picture is much less complete.  

Kim himself continued his investigation in \cite{Kim06} with existence results for locally free sheaves in rank 2 and a description of some of the geometry of these moduli spaces.  K. Yoshioka succesfully computed in \cite{Yos03} the Hodge polynomial of $M_{H,Y}(v)$ for primitive $v$ of positive odd rank and generic polarization $H$ on an unnodal $Y$, showing that it is equal to the Hodge polynomial of the Hilbert scheme of $\frac{v^2+1}{2}$ points on $Y$.  It follows that these moduli spaces are non-empty, consisting of two isomorphic irreducible connected components parametrizing sheaves whose determinant line bundles differ by $K_Y$.  M. Hauzer \cite{Hau10} refined the techniques of Yoshioka to conclude that the Hodge polynomials in even rank are the same as those of a moduli space of sheaves with rank 2 or 4.  Moreover, like Enriques surfaces themselves, K. Yamada \cite{Yamada} proved that under certain minor assumptions $M_{H,Y}(v)$ has torsion canonical divisor.  Finally, G. Sacc\`{a} \cite{Sac} obtained some beautiful results about the geometry of Gieseker moduli spaces in rank 0.  Nevertheless, even non-emptiness and irreducibility for moduli of sheaves of even rank is unknown in general.

In this paper, we complete the sequence of results mentioned above for Gieseker moduli spaces of torsion-free sheaves with regard to non-emptiness.

\subsection{Summary of Main Results}
The central result of this paper is the non-emptiness of the moduli space $M_{H,Y}(v)$ parametrizing Gieseker-semistable sheaves of positive rank on an unnodal Enriques surface with respect to a generic parametrization $H$:
\begin{Thm} Let $v_0$ be a primitive Mukai vector of positive rank with $v_0^2\geq -1$ and $m\in\Z_{>0}$.  For generic polarization $H$, the moduli space of Gieseker-semistable sheaves of Mukai vector $v=mv_0$ with respect to $H$ on an unnodal Enriques surface $Y$, $M_{H,Y}(v)$, is non-empty.
\end{Thm}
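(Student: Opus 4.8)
My plan is to separate the multiplicity from the primitive class, to treat the new (even-rank) case by descent from the $K3$ universal cover, and to quote Yoshioka for the odd-rank case. First I would reduce to $m=1$: since $v_0$ is primitive, every $H$-semistable sheaf with Mukai vector $v_0$ has the same reduced Hilbert polynomial, so if $F$ is such a sheaf then $F^{\oplus m}$ is (strictly) $H$-Gieseker-semistable with Mukai vector $mv_0=v$; hence $M_{H,Y}(v)\neq\varnothing$ as soon as $M_{H,Y}(v_0)\neq\varnothing$, and it suffices to treat primitive $v_0$. For $v_0$ of odd rank the non-emptiness is already contained in Yoshioka's computation \cite{Yos03}, so the essential case is $v_0$ of positive even rank $r_0=2r$. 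Here I would pass to Bridgeland stability: working in the large-volume chamber adjacent to the ample cone identifies $M_{H,Y}(v_0)$ with a moduli space $M_{\sigma,Y}(v_0)$ of $\sigma$-semistable objects, and --- because non-emptiness is constant across walls of $\Stab(Y)$ --- it is enough to produce a semistable object of class $v_0$ for \emph{some} $\sigma$, which I will take to be pulled back from an $\iota$-invariant stability condition on the cover.

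Let $\pi\colon\Y\to Y$ be the $K3$ universal cover with its free covering involution $\iota$. The numerics are tailored to this: since $\pi$ is \'etale of degree two, $\langle\pi^*v_0,\pi^*v_0\rangle=2v_0^2\geq-2$, which is exactly the existence threshold on a $K3$ surface. The construction I propose is to descend a \emph{decomposable} invariant object. Because $\pi_*$ is surjective on the algebraic Mukai lattices and multiplies the rank by two, and $r_0=2r$ is even, I can choose a class $u$ on $\Y$ with $\pi_*u=v_0$ and $\rk(u)=r>0$; using $\pi^*\pi_*=\mathrm{id}+\iota^*$ this is the same as $u+\iota^*u=\pi^*v_0$. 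Writing $u^2=v_0^2-\langle u,\iota^*u\rangle$ and modifying $u$ by classes in the anti-invariant lattice $\ker\pi_*=(\mathrm{id}-\iota^*)H^*_{\alg}(\Y)$, which is indefinite, I can arrange $u^2\geq-2$ while keeping $\rk(u)=r>0$. Yoshioka's existence theorem on the $K3$ surface then yields a semistable object $E$ of class $u$; the object $F:=E\oplus\iota^*E$ is $\iota$-invariant, semistable (equal phase, as the stability condition is $\iota$-invariant), and has class $u+\iota^*u=\pi^*v_0$.

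It remains to descend $F$. Since $\iota$ acts freely, the obstruction to a $\Z/2$-linearization lies in $H^2(\Z/2,\C^*)=0$, so $F$ descends along $\pi$ to an object $G$ on $Y$ with $\pi^*G\cong F$; the two linearizations differ by the sign character and produce $G$ and $G\otimes K_Y$. As $\pi^*G$ is semistable, so is $G$, and $\pi^*\ch(G)=\ch(F)$ forces the Mukai vector of $G$ to be $v_0$ or $v_0\otimes e^{K_Y}$. Because $\otimes K_Y$ is an autoequivalence identifying $M_{H,Y}(v_0)$ with $M_{H,Y}(v_0\otimes e^{K_Y})$, either way $M_{\sigma,Y}(v_0)\neq\varnothing$, and transporting back across the walls to the Gieseker chamber gives $M_{H,Y}(v_0)\neq\varnothing$.

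The main obstacle, and the reason for routing everything through Bridgeland stability, is the clean descent of \emph{semistability} along $\pi$: at the level of Gieseker-semistability the pushforward $\pi_*\pi^*G=G\oplus(G\otimes K_Y)$ mixes the $K_Y$-twist into the sub-leading terms of the Hilbert polynomial, so comparison of reduced Hilbert polynomials upstairs and downstairs is delicate, whereas the additivity of the central charge makes the descent transparent for a suitable $\iota$-invariant $\sigma$. I therefore expect the technical heart to be (i) constructing $\iota$-invariant stability conditions on $\Y$, identifying their $\pi$-descended moduli with Bridgeland moduli on $Y$, and proving that non-emptiness is deformation-invariant across walls (the Enriques analogue of the projectivity-and-deformation package of Bayer--Macr\`{\i}, established via the cover), and (ii) the parity phenomenon: the direct-sum construction intrinsically produces only even-rank classes, so the odd-rank case genuinely requires an $\iota$-fixed \emph{stable} object on the holomorphic-symplectic moduli space $M_{\sigma',\Y}(\pi^*v_0)$ --- a fixed point of an anti-symplectic involution that need not exist in general (the free Enriques involution itself being the cautionary example) --- which is why I would rely on Yoshioka's separate odd-rank result rather than reprove it by a Lefschetz argument.
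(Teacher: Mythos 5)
Your reduction to $m=1$ and your use of \cite{Yos03} in odd rank match the paper, but the even-rank construction --- the only place where new content is required --- contains a fatal lattice-theoretic error. The paper's standing hypothesis is that $Y$ is unnodal, and in that case (as recalled in Section 2) $\iota^*$ acts trivially on $\Pic(\Y)$, so $\Pic(\Y)=\pi^*\Pic(Y)$ and hence $\iota^*$ acts trivially on all of $\Hal(\Y,\Z)$. This has two consequences that break your argument. First, your ``anti-invariant lattice'' $(\id-\iota^*)\Hal(\Y,\Z)$ is \emph{zero}: there are no classes available to modify $u$ by, and $\pi_*$ is injective on $\Hal(\Y,\Z)$ (as $\pi^*\pi_*=\id+\iota^*=2\,\id$). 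Second, $\pi_*$ is far from surjective: every $u\in\Hal(\Y,\Z)$ has $c_1(u)=\pi^*c$ for some $c\in\NS(Y)$, so $\pi_*u=(2\rk(u),\,2c,\,\ast)$, and the image of $\pi_*$ consists precisely of classes whose rank \emph{and} first Chern class are both divisible by $2$. Equivalently, your own equation $u+\iota^*u=\pi^*v_0$ collapses to $2u=\pi^*v_0$, which has an integral solution only when $\pi^*v_0$ is $2$-divisible in the Mukai lattice of $\Y$. Hence for $v_0=(2,c_1,s)$ or $(4,c_1,s)$ with $c_1$ primitive in $\Num(Y)$ --- exactly the vectors the paper must produce, e.g. $c_1=H$ ample and primitive as in Theorem \ref{four} --- there is no class $u$ with $\pi_*u=v_0$ at all, and your object $F=E\oplus\iota^*E$, whose descent is nothing other than $\pi_*E$, can never have Mukai vector $v_0$. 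The direct-sum-and-descend strategy is therefore structurally incapable of proving the theorem; it is blocked by the same kind of parity phenomenon you correctly identified in odd rank, just one step deeper in the lattice.

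A secondary problem is that the wall-crossing package you route everything through (existence of $\iota$-invariant Bridgeland stability conditions on $\Y$, identification of descended moduli with moduli on $Y$, and invariance of non-emptiness across walls of $\Stab(Y)$) is substantial technology that you leave entirely unproved and that the paper neither uses nor needs. The paper's actual route in even rank is elementary by comparison: by Theorem \ref{hauz even} (Hauzer), the virtual Hodge polynomial of $M_{H,Y}(v)$ agrees with that of a moduli space of rank $2$ or $4$, so non-emptiness need only be established in those two ranks; rank $2$ follows from \cite{Kim98,Kim06} together with \cite[Theorem 0.1]{Hau10} for the one remaining vector; and the genuinely new input is rank $4$, where after twisting by a line bundle to normalize $c_1$ and $c_2$ (Theorem \ref{reduction}) a $\mu_H$-stable locally free sheaf is constructed by a Serre-correspondence/Cayley--Bacharach extension argument (Theorem \ref{four}), with a short wall-crossing remark inside the ample cone to pass from the polarization $c_1$ to a generic $H$. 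If you want to salvage a cover-based approach, the essential difficulty is producing $\iota$-invariant \emph{stable} objects of the non-split class $\pi^*v_0$ upstairs (the fixed locus of the induced involution on the K3 moduli space), which is the very problem Kim's Theorem \ref{Kim} addresses and which your construction was designed to avoid.
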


We prove this theorem in Section 6 by first reducing to the case $m=1$ so that every Gieseker-semistable sheaf is stable as well. The result for $v_0$ of odd rank was proved in \cite{Yos03}.  From Hauzer's main theorem \cite[Theorem 0.2]{Hau10}, the question of the existence and irreducibility of Gieseker moduli spaces in positive even rank is reduced to the same question in ranks 2 and 4.  The rank 2 case was essentially solved in \cite{Kim06}, and we prove existence in rank 4 by reducing to the following theorem:

\begin{Thm}Let $v=(4,c_1,2-k)$ be a primitive Mukai vector, with $-5\leq k\leq 1$.  Suppose that $c_1$ is ample and denote it by $H$.  Assume that $c_1^2$ satisfies the inequality in (\ref{table}) for the corresponding $k$.  Then there exists a $\mu_H$-stable vector bundle with Mukai vector $v$ on an unnodal Enriques surface $Y$.
\end{Thm}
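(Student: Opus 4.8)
The plan is to realize the desired rank-4 bundle as a generic extension of two $\mu_H$-stable bundles of rank 2, whose existence in the relevant numerical range is exactly the content of the rank-2 results of Kim \cite{Kim06}. Concretely, I would look for $\mu_H$-stable rank-2 bundles $F_1$ and $F_2$ on $Y$ with $c_1(F_1)+c_1(F_2)=c_1=H$ and with second Chern classes chosen so that the Mukai vector of any extension $0\to F_1\to E\to F_2\to 0$ equals $v=(4,c_1,2-k)$; the additivity of the Mukai vector makes this a purely numerical matching problem, and the inequalities recorded in (\ref{table}) for each value of $k\in\{-5,\dots,1\}$ are precisely what is needed to keep the rank-2 invariants inside the range where Kim's construction produces stable bundles. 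I would arrange the splitting $c_1=c_1(F_1)+c_1(F_2)$ so that the slopes straddle the slope of $E$, i.e. $\mu_H(F_1)<\mu_H(E)=\tfrac{1}{4}H^2<\mu_H(F_2)$, since this ordering is what forces the obvious sub- and quotient-bundles to respect stability.

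The next step is to guarantee that non-split extensions exist and, more importantly, that they exist in abundance. I would compute $\dim\Ext^1(F_2,F_1)$ via Riemann--Roch together with Serre duality on the Enriques surface, taking care of the usual twist by the 2-torsion canonical class $K_Y$ (so that $\Ext^2(F_2,F_1)\cong\Hom(F_1,F_2\otimes K_Y)^\vee$); stability of the $F_i$ controls these $\Hom$ and $\Ext^2$ terms, and the numerical hypotheses make $\chi(F_2,F_1)$ sufficiently negative that $\Ext^1(F_2,F_1)$ is not merely nonzero but of large dimension. Since an extension of vector bundles is automatically locally free, every class in $\Ext^1(F_2,F_1)$ yields a genuine rank-4 vector bundle $E$ with the prescribed Mukai vector, so no Serre-type Cayley--Bacharach analysis is required.

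The hard part will be verifying $\mu_H$-stability of a \emph{generic} extension $E$. I would argue by contradiction: given a saturated destabilizing subsheaf $D\subset E$ with $\mu_H(D)\geq\mu_H(E)$, set $D_1=D\cap F_1$ and let $D_2\subset F_2$ be the image of $D$, so that $0\to D_1\to D\to D_2\to 0$ exhibits $\mu_H(D)$ as a mediant of $\mu_H(D_1)$ and $\mu_H(D_2)$. Stability of $F_1$ forces $\mu_H(D_1)<\mu_H(F_1)<\mu_H(E)$, so the mediant can only reach $\mu_H(E)$ if the image $D_2\subset F_2$ already satisfies $\mu_H(D_2)\geq\mu_H(E)$, and stability of $F_2$ bounds such $D_2$ from above by $\mu_H(F_2)$, confining them to a bounded family. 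The crux is then a dimension count: a destabilizing $D$ yields a subsheaf $D_2$ in this bounded family that lifts to $E$, which happens only when the extension class lies in the kernel of the restriction map $\Ext^1(F_2,F_1)\to\Ext^1(D_2,F_1)$; the union of these kernels over the family of admissible $D_2$ is a proper closed subset of $\Ext^1(F_2,F_1)$ as soon as the dimension estimate from the previous step exceeds the dimension of the family, and the inequalities in (\ref{table}) are exactly what force this comparison. A generic extension class then avoids every destabilizing lift, and the same genericity excludes the strictly-semistable borderline where $\mu_H(D)=\mu_H(E)$, so $E$ is a $\mu_H$-stable rank-4 vector bundle with Mukai vector $v$. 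I expect this final dimension count to be the delicate point, since it is precisely where one must check that the room in $\Ext^1(F_2,F_1)$ genuinely dominates the family of potential destabilizers rather than just barely failing to.
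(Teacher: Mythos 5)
Your proposal replaces the paper's Serre-correspondence construction (extensions $0\to\OO(-2H)\to E\to\bigoplus_{i=1}^3 I_{Z_i}(H)\to 0$ with $Z_i$ generic points) by extensions of rank-2 stable bundles, but the key numerical premise of your argument is false in exactly the range the theorem must cover. On an unnodal Enriques surface, \emph{any} rank-2 sheaf $F$ that is $\mu$-stable for some polarization has Mukai vector $u$ with $u^2$ even and $u^2\geq 0$: indeed $u^2=\ext^1(F,F)-1-\ext^2(F,F)\geq -2$, and $u^2=-2$ would force $\ext^2(F,F)=1$, i.e. $F\cong F(K_Y)$, which on an unnodal surface forces $2\mid u$ and hence $4\mid u^2$, a contradiction. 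Writing $v_i=v(F_i)$, so that $v=v_1+v_2$ and $(v_1,v_2)=\tfrac{1}{2}(v^2-v_1^2-v_2^2)$, and using the slope-straddling hypothesis (which gives $\hom(F_2,F_1)=0$) together with Serre duality, you get $\ext^1(F_2,F_1)=\hom(F_1,F_2\otimes\OO(K_Y))+(v_1,v_2)$ and $\chi(F_2,F_1)=-(v_1,v_2)=\tfrac{1}{2}(v_1^2+v_2^2-v^2)\geq -\tfrac{1}{2}v^2$. So $\chi(F_2,F_1)$ can never be made ``sufficiently negative'': the available room is at most about $v^2/2$. This is fatal because the table's hypothesis allows, for each $k$, the minimal value $c_1^2=8(2-k)$, for which $v^2=c_1^2-8(2-k)=0$; there $\chi(F_2,F_1)\geq 0$ and $\ext^1(F_2,F_1)\leq\hom(F_1,F_2\otimes\OO(K_Y))$, which vanishes for a general pair, so the only extension is $F_1\oplus F_2$, which is unstable, and even for special pairs the Ext-space is far too small to dominate any positive-dimensional family of destabilizing data. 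This is precisely why the paper builds $E$ out of rank-1 pieces instead: the parameter space $\prod_i Y^{[l(Z_i)]}$ there has dimension $2\sum_i l(Z_i)=7H^2+2k$, which stays large even when $v^2=0$, and that surplus is what makes the destabilizer-avoidance count close.

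Two further gaps would remain even away from the extremal cases. First, the input you attribute to \cite{Kim06} is not what Kim proves: he produces rank-2 bundles that are $\mu_{c_1}$-stable with respect to their \emph{own} ample first Chern class, whereas you need $F_1,F_2$ that are $\mu_H$-stable for the fixed polarization $H=c_1(v)$ with $c_1(F_1)+c_1(F_2)=H$ and slopes straddling $\mu_H(E)$; by construction neither $c_1(F_i)$ is proportional to $H$, so the wall-crossing remark used in Section 6 of the paper (which only moves $H$ generically off a wall through $c_1$) does not transfer Kim's stability to $\mu_H$-stability, and no substitute is offered. Second, your lifting criterion misstates the bad locus: a saturated destabilizer $D$ of rank 3 has $D_1=D\cap F_1$ of rank 1 and image $D_2\subset F_2$ of rank 2, and is numerically allowed since $\tfrac{1}{3}\bigl(\mu_H(F_1)+2\mu_H(F_2)\bigr)>\mu_H(E)$; such a $D$ is \emph{not} a lift of $D_2$, and the correct condition is that $e|_{D_2}$ lie in the image of $\Ext^1(D_2,D_1)\to\Ext^1(D_2,F_1)$, a strictly weaker condition than $e|_{D_2}=0$ which enlarges the locus your dimension count must avoid (your boundedness claim also needs an argument, since slope bounds are blind to 0-dimensional modifications of $D_2$). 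So the strategy, while a legitimate classical one, cannot prove the statement as formulated.
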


The proof of this theorem in Section 5 uses the by-now classical Serre correspondence as in the proof of \cite[Theorem 5.1.6]{HL}, but instead of the usual techniques which demand $c_2$ to be sufficiently large, we use the large Picard lattice of Enriques surfaces to reduce ourselves to the assumptions of the theorem.  More notably, we show that these moduli spaces contain open subsets parametrizing $\mu$-stable \emph{vector bundles}, results which are again usually only achieved by asymptotic assumptions on $c_2$.  The reduction to primitive Mukai vectors of the form $(4,c_1,2-k)$ for $-5\leq k\leq 1$ is achieved in Section 4 by generalizing the methods of \cite{Kim98} to rank 4 vector bundles after first reviewing the necessary facts about divisors on Enriques surfaces in Section 3.  

We sum all of this up with the following description of the global and local properties of $M_{H,Y}(v)$ in Section 7:
\begin{Thm} Let $v=mv_0$ be a Mukai vector with $v_0$ primitive and $m>0$ with $H$ generic with respect to $v$.  Assume that $Y$ is unnodal.
\begin{enumerate}
\item The moduli space of Gieseker-semistable sheaves $M_{H,Y}(v)\neq \varnothing$ if and only if $v_0^2\geq -1$.
\item Either $\dim M_{H,Y}(v)=v^2+1$ and $M^s_{H,Y}(v)\neq\varnothing$,  or $m>1$ and $v_0^2\leq 0$.
\item If $M_{H,Y}(v)\neq M^s_{H,Y}(v)$ and $M^s_{H,Y}(v)\neq\varnothing$, the codimension of the semistable locus is at least 2 if and only if $v_0^2>1$ or $m>2$.  Moreover, in this case $M_{H,Y}(v)$ is normal with torsion canonical divisor.
\end{enumerate}
\end{Thm}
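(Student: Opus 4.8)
The plan is to dispatch the three assertions in turn, reducing each global statement to a Riemann--Roch computation at a generic stable point followed by a dimension estimate of the strictly semistable locus. The only genuinely new input (non-emptiness) is already supplied by the existence theorem and by the statement $M^s_{H,Y}(v)\neq\varnothing$ for $v^2>0$, so the work here is bookkeeping plus one local structure argument. For part (1), the implication $v_0^2\ge -1\Rightarrow M_{H,Y}(v)\neq\varnothing$ is exactly the main existence theorem, so only the converse needs an argument. I would take any semistable $E$ with $v(E)=mv_0$ and pass to a stable Jordan--Hölder factor $F$, whose Mukai vector is $m'v_0$ with $1\le m'\le m$. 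Serre duality on $Y$ gives $\ext^2(F,F)=\dim\Hom(F,F\otimes K_Y)\le 1$, with equality only if $F\cong F\otimes K_Y$, while stability gives $\dim\Hom(F,F)=1$; together with the Enriques Riemann--Roch formula $\chi(F,F)=-v(F)^2$ this yields $\ext^1(F,F)=(m')^2v_0^2+1+\ext^2(F,F)\ge 0$, hence $(m')^2v_0^2\ge -2$. This already forces $v_0^2\ge -2$, and I would rule out the borderline value $v_0^2=-2$ separately: it can occur only for $m'=1$ with $F$ rigid and $K_Y$-invariant, so $F=\pi_*G$ for a sheaf $G$ on the covering $K3$ surface, a situation I expect to exclude using the unnodality of $Y$ and the parity of the Mukai pairing on the cover.

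For part (2) I would isolate the two situations forcing $M^s_{H,Y}(v)\neq\varnothing$. When $m=1$ genericity of $H$ makes semistability equivalent to stability, so $M^s_{H,Y}(v)=M_{H,Y}(v)$ is non-empty by (1); when $v_0^2>0$ (equivalently $v^2>0$) non-emptiness of $M^s_{H,Y}(v)$ is the stable-locus result already established. In either case I pick a stable $E$ with $E\not\cong E\otimes K_Y$, so that $\ext^2(E,E)=0$ and deformations are unobstructed; then $M_{H,Y}(v)$ is smooth at $[E]$ of dimension $\ext^1(E,E)=v^2+1$, and the codimension estimate below shows the boundary cannot exceed this dimension, so $\dim M_{H,Y}(v)=v^2+1$. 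The remaining possibility, $m>1$ and $v_0^2\le 0$, is exactly the complementary case, so the stated dichotomy holds; there $M^s_{H,Y}(v)$ is typically empty (for $v_0^2=-1$ a stable sheaf would force $\ext^1<0$, and for $v_0^2=0$ every semistable sheaf is $S$-equivalent to a sum of stable sheaves of Mukai vector $v_0$).

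The heart of part (3) is the codimension of $M_{H,Y}(v)\setminus M^s_{H,Y}(v)$, which I would compute by stratifying according to Jordan--Hölder type. Since an $S$-equivalence class only remembers its stable factors, the stratum whose polystable representative is $E_1\oplus E_2$ with $v(E_i)=m_iv_0$ and $m_1+m_2=m$ has dimension $(m_1^2v_0^2+1)+(m_2^2v_0^2+1)$, so its codimension in the $(m^2v_0^2+1)$-dimensional space $M_{H,Y}(v)$ is $2m_1m_2v_0^2-1$. Minimizing $m_1m_2$ over admissible splittings, and checking that strata with more factors or with repeated factors have strictly larger codimension, shows the minimal codimension is $2(m-1)v_0^2-1$, attained at $(m_1,m_2)=(1,m-1)$. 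Under the hypotheses of (3) one has $m\ge 2$ and $M^s_{H,Y}(v)\neq\varnothing$, which by (2) forces $v_0^2\ge 1$; then $2(m-1)v_0^2-1\ge 2$ is equivalent to $(m-1)v_0^2\ge 2$, i.e.\ to ``$v_0^2>1$ or $m>2$'', as claimed.

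Finally, granting codimension at least $2$, I would deduce normality and the statement on the canonical divisor. The smooth locus $M^s_{H,Y}(v)$ is regular of dimension $v^2+1$ with complement of codimension $\ge 2$, giving Serre's condition $R_1$; for $S_2$ I would use the local description of $M_{H,Y}(v)$ near a polystable sheaf $E$ as the GIT quotient by $\Aut(E)$ of (an analytic neighbourhood of $0$ in) the Kuranishi space, whose quadratic part is the cone cut out by the Yoneda obstruction $\kappa\colon\Ext^1(E,E)\to\Ext^2(E,E)$, following the Kaledin--Lehn--Sorger analysis in the $K3$ case. Showing that this local model is normal---equivalently that the obstruction cone is reduced and of the expected codimension along each stratum---is the step I expect to be the main obstacle, since it requires controlling $\Ext^2$ and the rank of $\kappa$ on the boundary rather than merely at a general point. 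Once normality is in place, the torsion of the canonical divisor follows formally: $K_{M^s_{H,Y}(v)}$ is torsion because $K_Y$ is, and by normality it extends across the codimension $\ge 2$ boundary, so the conclusion reduces to and is compatible with Yamada's theorem on the canonical bundle of $M_{H,Y}(v)$.
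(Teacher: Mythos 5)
Your proposal assumes precisely the step where this theorem has content. In part (2), for $m>1$ and $v_0^2>0$, you invoke non-emptiness of $M^s_{H,Y}(v)$ as ``the stable-locus result already established''; but in the paper that statement \emph{is} part (2) of this very theorem (the abstract's claim that $M^s_{H,Y}(v)\neq\varnothing$ for $v^2>0$ refers to it), and nothing proved before it supplies the claim for non-primitive $v$: Section 6 only produces \emph{semistable} sheaves, Yoshioka handles primitive odd rank, and Kim handles rank 2. So your argument for (2) is circular exactly at its heart --- once a stable $E$ with $E\ncong E(K_Y)$ exists, computing $\ext^1(E,E)=v^2+1$ is routine. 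The paper fills this hole with a construction you never perform: take (inductively) $E\in M^s_{H,Y}((m-1)v_0)$ and $F\in M_{H,Y}(v_0)$; since $\chi(F,E)=-(m-1)v_0^2<0$ there is a non-split extension $0\to E\to E'\to F\to 0$, and stability of $E$ and $F$ shows $\Hom(E',E')=\C$; choosing $E\ncong E(K_Y)$ (possible because the $K_Y$-fixed locus has positive codimension when $v_0^2>0$) gives $\Hom(E',E'(K_Y))=0$, so $M_{H,Y}(v)$ is smooth of dimension $v^2+1$ at the strictly semistable point $[E']$; since the strictly semistable locus has dimension at most $(m_1^2+m_2^2)v_0^2+2<v^2+1$ by induction, the component through $[E']$ must contain stable sheaves. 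Without this (or an equivalent) argument, part (2) is unproven, and part (3) --- whose stratum dimensions presuppose $\dim M_{H,Y}(m_iv_0)=m_i^2v_0^2+1$ --- inherits the gap.

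Two smaller defects. In part (1) you leave the borderline case $v_0^2=-2$ to a hoped-for argument (``unnodality and parity on the cover''); the actual one-line exclusion is in the paper's background: on an unnodal Enriques surface $F\cong F(K_Y)$ forces $v(F)$ to be divisible by $2$ in $\Hal(Y,\Z)$, contradicting $v(F)=v_0$ primitive in your case $m'=1$, so $\ext^2(F,F)=0$ and $v_0^2\geq -1$ directly. And in part (3) you concede that normality of the local model via Kaledin--Lehn--Sorger is ``the main obstacle'' and do not complete it; the paper's route is lighter and avoids this: for stable $E$ one has $\ext^2(E,E)\leq 1$, so every singular point of $M^s_{H,Y}(v)$ is a hypersurface singularity (hence Cohen--Macaulay, giving $S_2$), while Kim's bound on the dimension of the singular locus together with the codimension-$\geq 2$ strictly semistable locus gives $R_1$; Serre's criterion then yields normality, and the torsion canonical divisor follows as in the proof of \cite[Proposition 8.3.1]{HL}.
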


We finish the paper in Section 8 with a proof of irreducibility in case $v$ is primitive and $v^2=0$:

\begin{Thm} Let $v$ be a primitive Mukai vector on an unnodal Enriques surface $Y$ with $v^2=0$.  Then $M_{H,Y}(v,L)$ is irreducible, where $L$ is one of the two lines bundles with $c_1(L)=c_1(v)$.
\end{Thm} 

Note that $0=v^2=c_1^2-2rs$, so the evenness of $c_1^2$ implies that either $r$ or $2s$ is even.  But since $2s=r+c_1^2-2c_2$ implies that $r\equiv 2s(\mod 2)$, both $r$ and $2s$ are even, so this result does not follow from \cite{Yos03}.

\subsection{Acknowledgements} The author would like to thank his advisor, Lev Borisov,  for his constant support and guidance, especially in talking out ideas.  The author was partially supported by NSF grant DSM 1201466.
\section{Review: Moduli spaces of semistable sheaves}
We start by recalling the basic lattice-theoretical structure, given by the Mukai lattice.
We then review slope and Gieseker stability and the known results about existence and non-emptiness of moduli spaces of semistable sheaves.

\subsection*{The algebraic Mukai lattice}
Let $Y$ be an Enriques surface.  Its universal cover is a projective $K3$ surface $\tilde{Y}$.  $\tilde{Y}$ comes with a fixed-point free covering involution $\iota$ such that $Y=\tilde{Y}/\langle \iota\rangle$.

Denote by \begin{equation}\label{Mukai lattice} H^*_{\alg}(Y,\Z):=H^0(Y,\Z)\oplus \mathrm{NS}(Y)\oplus \frac{1}{2}\Z \rho_Y\subset H^0(Y,\Z)\oplus H^2(Y,\Z)\oplus H^4(Y,\Q),\end{equation} the Mukai lattice of $Y$, where $\rho_Y$ is the fundamental class.  Then $$v \colon K_{\num}({Y}) \xrightarrow{\sim} H^*_{\alg}({Y}, \Z)$$ given by $v(E) = \ch(E) \sqrt{\td({Y})}$ associates to $E$ in the numerical $K$-group of $Y$ its associated Mukai vector.  Written according to the decomposition \eqref{Mukai lattice} this becomes \[v(E)=(r(E),c_1(E),\frac{r(E)}{2}+\ch_2(E)).\]
We denote the Mukai pairing $H^*_{\alg}(Y,\Z)\times H^*_{\alg}(Y,\Z)\to \Z$ by $(\blank, \blank)$; it can be
defined by $(v(E), v(F)) := - \chi(E, F)$, where $$\chi(E,F)=\sum_p (-1)^p\ext^p(E,F)$$ denotes the Euler pairing on $K(Y)$.  This becomes non-degenerate when modding out by its kernel to get $K_{\num}(Y)$.  
According to the decomposition \eqref{Mukai lattice}, we have
\[
\left( (r,c,s),(r',c',s')\right) = c.c' - rs' - r's,
\]
for $(r,c,s),(r',c',s')\in H^*_{\alg}(Y,\Z)$.  

We call a Mukai vector $v$ \emph{primitive} if it is not divisible in $H^*_\alg(Y,\Z)$.  Note that the covering space map $\pi:\Y\rightarrow Y$ induces a primitive embedding $$\pi^*:\Hal(Y,\Z)\into\Hal(\Y,\Z):=H^0(\tilde{Y},\Z)\oplus \NS(\tilde{Y})\oplus H^4(\tilde{Y},\Z)$$ such that $(\v,\pi^*w)=2(v,w)$, and it identifies $\Hal(Y,\Z)$ with the $\iota^*$-invariant component of $\Hal(\tilde{Y},\Z)$.  It follows that for a primitive Mukai vector $v$, $\v$ is primitive as well.

\subsection*{Slope stability}
Let $H\in\Amp(X)$ on a smooth projective surface $X$.
We define the slope function
$\mu_H$ on $\Coh X$ by
\begin{equation} \label{eq:muomegabeta}
\mu_H(E) = 
\begin{cases}
\frac{H.c_1(E)}{r(E)} & \text{if $r(E) > 0$,} \\
+\infty & \text{if $r(E) = 0$.}
\end{cases}
\end{equation}
This gives a notion of slope stability for sheaves, for which Harder-Narasimhan filtrations exist (see \cite[Section 1.6]{HL}).  Recall that a torsion free coherent sheaf $E$ is called slope \emph{semistable} (resp. \emph{stable}) with respect to $H$ if for every $F\subset E$ with $0<r(F)<r(E)$ we have $\mu_H(F)\leq\mu_H(E)$ (resp. $\mu_H(F)<\mu_H(E)$).  
We will sometimes use the notation $\mu$-stability, or $\mu_H$-stability if we want to make the dependence on $H$ clear.  Also recall that every torsion free coherent sheaf $E$ admits a unique \emph{Harder-Narasimhan filtration} $$0=\HN^0(E)\subset\HN^1(E)\subset ... \subset\HN^n(E)=E,$$ with factors $E_i=\HN^i(E)/\HN^{i-1}(E)$ satisfying $$\mu_H(E_1)>...>\mu_H(E_n).$$  

\subsection*{Gieseker stability}
Let $H\in\Amp(X)$ on a smooth projective surface $X$.
Recall that the \emph{Hilbert polynomial} is defined by
\[
P(E,m) := \int_X (1,mH,\frac{m^2H^2}{2}) . v(E),
\]
for $E\in\Coh(X)$.  This polynomial can uniquely be written in the form $\sum_{i=0}^{\dim(E)} a_i(E) \frac{m^i}{i!}$, and we define the \emph{reduced Hilbert polynomial} by 
$$p(E,M):=\frac{P(E,m)}{a_{\dim(E)}(E)}.$$
This gives rise to the notion of $H$-Gieseker stability for sheaves.
We refer to \cite[Section 1]{HL} for basic properties of Gieseker stability, but we just mention that like above, a pure dimensional sheaf $E$ is called \emph{Gieseker semistable} (resp. \emph{Gieseker stable}) if for every proper subobject $0\neq F\subset E$, $p(F,m)\leq p(E,m)$ (resp. $p(F,m)<p(E,m)$) for all $m\gg 0$.  Harder-Narasimhan filtrations are defined analogously as above with $p$ replacing $\mu_H$, and in addition, every semistable sheaf admits a \emph{Jordan-H\"{o}lder filtration} with stable factors all of the same reduced Hilbert polynomial.  The filtration is not unique, but the factors and their multiplicities are.

It is worth pointing out that $$E\text{ is }\mu-\text{stable}\Rightarrow E\text{ is Gieseker-stable }\Rightarrow E\text{ is Gieseker-semistable }\Rightarrow E\text{ is }\mu-\text{semistable}.$$

\subsection*{Moduli spaces of stable sheaves}
Let $H\in\Amp(X)$ on a smooth projective surface $X$.
We fix a Mukai vector $v\in H^*_{\alg}(X,\Z)$ (or in other words, we fix the topological invariants $r,c_1,c_2$).
We denote by $\MMM_{H,X}(v)$ the moduli stack of flat families of $H$-Gieseker semistable sheaves with Mukai vector $v$.
By \cite[Chapter 4]{HL} there exists a projective variety $M_{H,X}(v)$ which is a coarse moduli space parameterizing $S$-equivalence classes of semistable sheaves.  Recall that two semistable sheaves are called $S$-equivalent if they have the same Jordan-H\"{o}lder factors with the same multiplicities.
The open substack $\MMM_{H,X}^s(v)\subseteq \MMM_{H,X}(v)$ parameterizing stable sheaves is a $\mathbb{G}_m$-gerbe over the similarly defined open subset $M_{H,X}^s(v)\subseteq M_{H,X}(v)$ of the coarse moduli space.

We must also recall the definition of a polarization that is \emph{generic} with respect to a given Mukai vector $v$ satisfying $v^2>-r(v)^2$ and $r(v)\geq 2$ (see \cite[Section 4.C]{HL}).  Consider $\xi\in\Num(X)$ with $-\frac{r(v)^2}{4}(v^2+r(v)^2)\leq \xi^2<0$.  The \emph{wall} for $v$ corresponding to $\xi$ is the real hyperplane $\xi^{\perp}\subset\Nef(X)$.  These walls are locally finite.  A polarization $H\in\Amp(X)$ is \emph{generic} with respect to $v$ if it does not lie on any of these walls.  An important consequence of this is that for a destabilizing subobject $F$ of $E$, with $v(E)$ as above and $H$ generic with respect to $v(E)$, $v(F)\in\R_{>0}v(E)$.  So if $v$ is primitive, any $H$-Gieseker semistable sheaf $E$ with $v(E)=v$ is Gieseker-stable as well.  If, in addition, $c_1$ is primitive in $\NS(X)$, then any $\mu_H$-semistable sheaf is even $\mu_H$-stable. 

We review one last facet of the theory of moduli spaces of sheaves.  While a coarse moduli space exists, it is not always a fine moduli space, i.e. there does not always exist a universal family of semistable sheaves.  To remedy the possible lack of a universal family, Mukai \cite{Muk87} came up with the following substitute, which is usually good enough for most purposes:

\begin{Def} Let $T$ be an algebraic space of finite-type over $\C$ and $X$ a smooth projective variety.
\begin{enumerate} 
\item A family $\EE$ on $T\times X$ is called a \emph{quasi-family} of objects in $\MMM_{H,X}(v)$ if for all closed points $t\in T$, there exists $E\in \MMM_{H,X}(v)(\C)$ such that $\EE_t\cong E^{\oplus \rho}$, where $\rho>0$ is an integer which is called the \emph{similitude} and is locally constant on $T$.
\item Two quasi-families $\EE$ and $\EE'$ on $T$, of similitudes $\rho$ and $\rho'$, respectively, are called \emph{equivalent} if there are locally free sheaves $\NN$ and $\NN'$ on $T$ such that $\EE\otimes p_T^*\NN\cong \EE'\otimes p_T^*\NN'$.  It follows that the similitudes are related by $\rk \NN \cdot \rho=\rk \NN'\cdot \rho'$.
\item A quasi-family $\EE$ is called \emph{quasi-universal} if for every scheme $T'$ and quasi-family $\EE'$ on $T'$, there exists a unique morphism $f:T'\to T$ such that $f^*\EE$ is equivalent to $\EE'$.
\end{enumerate}
\end{Def}

The usual techniques (see for example \cite[Theorem A.5]{Muk87} or \cite[Section 4.6]{HL}) show that a quasi-universal family exists on $M_{H,X}^s(v)$ and is unique up to equivalence.  

\subsection{Moduli of sheaves on Enriques surfaces}
Now let us recall the relevant results that are known for Enriques surfaces.  From standard results in the deformation theory of sheaves, we have for a Schur sheaf $E$, i.e. $\hom(E,E)=1$, $$v^2+1\leq \dim_E M_{H,Y}(v)\leq \dim T_E M_{H,Y}(v)=v^2+1+\ext^2(E,E)=v^2+1+\hom(E,E(K_Y)).$$  Kim's main structure result from \cite{Kim} is the following:
\begin{Thm}\label{Kim} Let $Y$ be an Enriques surface with K3 cover $\tilde{Y}$.  
\begin{enumerate}
\item $M_{H,Y}^s(v)$ is singular at $E$ if and only if $E\cong E(K_Y)$ except if $E$ belongs to a 0-dimensional component ($E$ is a spherical sheaf and $v^2=-2$) or a 2-dimensional component ($v^2=0$), along which all sheaves are fixed by $- \otimes \OO(K_Y)$.
\item The singular locus of $M_{H,Y}^s$ is a union of the images under $\pi_*$ of finitely many open subsets $M^{s,\circ}_{H,\tilde{Y}}(w)$ for different $w\in\Hal(\tilde{Y})$ such that $\pi_*(w)=v$, where $$M^{s,\circ}_{H,\tilde{Y}}(w):=\{F\in M^s_{H,\tilde{Y}}(w)|F\ncong \iota^*F\}.$$  The singular locus has even dimension at most $\frac{1}{2}(v^2+4)$.  In particular, $M_{H,Y}^s(v)$ is generically smooth and everywhere smooth if $r(v)$ is odd.
\item The pull-back map $\pi^*:M^s_{H,Y}(v)\to M_{H,\tilde{Y}}(\pi^*v)$ is a double cover onto a Lagrangian subvariety of $M_{H,\tilde{Y}}(\pi^*v)$, the fixed locus of $\iota^*$, and is branched precisely along the locus where $E\cong E(K_Y)$.
\end{enumerate}
\end{Thm}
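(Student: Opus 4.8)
The plan is to extract all three statements from the deformation theory of a stable sheaf $E$ coupled with the geometry of the \'etale double cover $\pi\colon\tilde{Y}\to Y$ with covering involution $\iota$. Throughout write $\tau=-\otimes\OO(K_Y)$ for the induced involution on sheaves.

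\emph{Step 1 (deformation theory; the engine of (1)).} Let $E$ be stable, hence simple. From $(v,v)=-\chi(E,E)$ and Serre duality on $Y$ one gets
$$\ext^1(E,E)=v^2+1+\ext^2(E,E),\qquad \ext^2(E,E)=\hom(E,E(K_Y)).$$
Because $K_Y$ is numerically trivial, $\tau E=E(K_Y)$ is again stable with the same reduced Hilbert polynomial as $E$, so any nonzero morphism $E\to E(K_Y)$ is an isomorphism, and $\hom(E,E(K_Y))$ equals $1$ when $E\cong E(K_Y)$ and $0$ otherwise. Feeding this into the a priori inequalities $v^2+1\leq\dim_EM^s_{H,Y}(v)\leq\dim T_EM^s_{H,Y}(v)=v^2+1+\ext^2(E,E)$ shows: if $E\not\cong E(K_Y)$ then the tangent space has the expected dimension $v^2+1$ and $E$ is smooth; if $E\cong E(K_Y)$ then $\dim T_E=v^2+2$, so $E$ is singular precisely when the component through it has the expected dimension $v^2+1$. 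Statement (1) is therefore reduced to identifying the components along which $E\cong E(K_Y)$ holds identically and which have dimension $>v^2+1$; I would locate these with the cover.

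\emph{Step 2 (the cover, and the dichotomy for $\pi^*E$).} The adjunction $\pi^*\dashv\pi_*$ together with $\pi$ being \'etale of degree $2$ gives $\pi_*\pi^*=\id\oplus\tau$ and $\pi^*\pi_*=\id\oplus\iota^*$; in particular $(\pi_*F)(K_Y)\cong\pi_*F$ for every $F$, and $\pi^*E$ is $\iota^*$-invariant for every $E$. The key input I would establish is the classical descent dichotomy for the $\Z/2$-cover: for $E$ stable, $\pi^*E$ is semistable, and either (a) $E\not\cong E(K_Y)$ and $\pi^*E$ is stable, or (b) $E\cong E(K_Y)$ and $\pi^*E\cong F\oplus\iota^*F$ with $F$ stable, $\iota^*F\not\cong F$, and $E\cong\pi_*F$. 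Conversely every stable $\iota^*$-invariant sheaf on $\tilde{Y}$ descends (over $\C$ the sign ambiguity $\phi\circ\iota^*\phi=c\cdot\id$ is removed by rescaling $\phi$, since $c\in\C^*$ is a square), its two descents being $E$ and $E(K_Y)$. I would also record that $\iota$ is anti-symplectic on $\tilde{Y}$, so the induced involution $\iota^*$ on the holomorphic symplectic space $M_{H,\tilde{Y}}(\pi^*v)$ is anti-symplectic.

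\emph{Step 3 (assembling (3) then (2)).} By Step 2 the map $\pi^*\colon M^s_{H,Y}(v)\to M_{H,\tilde{Y}}(\pi^*v)$ lands in $\mathrm{Fix}(\iota^*)$, is $\tau$-invariant, and has fibers $\{E,E(K_Y)\}$ over stable points and a single point over the polystable points $[F\oplus\iota^*F]$; descent shows it is surjective onto $\mathrm{Fix}(\iota^*)$. Hence $\pi^*$ realizes $M^s_{H,Y}(v)$ as the double cover of $\mathrm{Fix}(\iota^*)$ determined by $\tau$, branched exactly along $\mathrm{Fix}(\tau)=\{E\cong E(K_Y)\}$; since $\iota^*$ is anti-symplectic its fixed locus is Lagrangian, consistent with $\dim M_{H,\tilde{Y}}(\pi^*v)=(\pi^*v)^2+2=2v^2+2$ whose Lagrangian fixed locus has dimension $v^2+1=\dim M^s_{H,Y}(v)$. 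This is (3). For (2), case (b) identifies the singular locus $\{E\cong E(K_Y)\}$ with $\bigcup_{\pi_*w=v}\pi_*\big(M^{s,\circ}_{H,\tilde{Y}}(w)\big)$, a finite union; since $\pi_*$ doubles ranks, $r(v)$ odd forces emptiness and hence everywhere-smoothness. To bound its dimension, write $w=\tfrac12(\pi^*v+u)$ with $u=w-\iota^*w$ in the $\iota^*$-anti-invariant sublattice; as $(\pi^*v,u)=0$ one finds $w^2=\tfrac14\big((\pi^*v)^2+u^2\big)=\tfrac12 v^2+\tfrac14u^2$. The $\iota^*$-invariant sublattice is $\pi^*\Hal(Y)$ of signature $(2,10)$, so its complement in $\Hal(\tilde{Y})$ is negative definite, giving $u^2\leq 0$ and $w^2\leq\tfrac12 v^2$. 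Thus each component of the singular locus has the even dimension $w^2+2\leq\tfrac12(v^2+4)$, proving (2).

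\emph{Step 4 (exceptional components and the expected obstacle).} The bound $w^2+2\leq\tfrac12(v^2+4)$ exceeds the expected dimension $v^2+1$ exactly when $v^2<2$, i.e. $v^2\in\{-2,0\}$; these are the two exceptional cases in (1). When $v^2=-2$ one has $\ext^1(E,E)=0$, so $E$ is rigid and forms a reduced, hence smooth, point with $E\cong E(K_Y)$ forced; when $v^2=0$ the locus $w^2=0$ yields a $2$-dimensional component identified via $\pi_*$ with the smooth $K3$ moduli space $M^{s}_{H,\tilde{Y}}(w)$, on which $E\cong E(K_Y)$ holds throughout, so it is smooth of dimension $\ext^1(E,E)=2$. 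For all other $v$ the singular locus is a proper subvariety, so a sheaf with $E\cong E(K_Y)$ lies in the closure of the $(v^2+1)$-dimensional smooth locus and is genuinely singular, completing (1) and the ``generically smooth'' clause of (2). I expect the main obstacle to be Step 2: proving the stability dichotomy for $\pi^*E$ and the surjectivity of $\pi^*$ onto $\mathrm{Fix}(\iota^*)$ (i.e. controlling (semi)stability under pullback and pushforward along the cover and carrying out the $\Z/2$-descent), together with verifying that $\iota^*$ acts anti-symplectically so that $\mathrm{Fix}(\iota^*)$ is Lagrangian; once these are in hand, (1)--(3) follow by the bookkeeping above.
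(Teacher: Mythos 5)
This theorem is stated in the paper only as a recollection of Kim's structure result \cite{Kim}, with no proof given, and your reconstruction follows precisely Kim's original line of argument: Serre duality with $\hom(E,E(K_Y))\in\{0,1\}$ controlling the obstruction space, the $\Z/2$-descent dichotomy for $\pi^*E$ along the \'etale double cover, the anti-symplectic action of $\iota^*$ making the fixed locus Lagrangian, and the lattice estimate $w^2\leq\frac{1}{2}v^2$ (with $w^2+2$ even, which is what closes the borderline case $v^2=2$ for generic smoothness, as your outline implicitly uses) for the bound $\frac{1}{2}(v^2+4)$ on the singular locus. The single step you flag as the expected obstacle --- the stability/polystability dichotomy for $\pi^*E$ and the surjectivity of descent onto $\mathrm{Fix}(\iota^*)$ --- is exactly the classical input Kim establishes, so your proposal is correct and takes essentially the same route as the cited source.
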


Now recall that for a variety $X$ over $\C$, the cohomology with compact support $H^*_c(X,\Q)$ has a natural mixed Hodge structure.  Let $e^{p,q}(X):=\sum_k (-1)^k h^{p,q}(H^k_c(X))$ and $e(X):=\sum_{p,q}e^{p,q}(X)x^p y^q$ be the virtual Hodge number and Hodge polynomial, respectively.  Moreover, for an Enriques surface $Y$ we recall that the kernel of $\NS(Y)\to \Num(Y)$ is given by $\langle K_Y\rangle$, and thus $$M_{H,Y}(v)=M_{H,Y}(v,L_1)\coprod M_{H,Y}(v,L_2),$$ where $M_{H,Y}(v,L_i)$ denotes those $E\in M_{H,Y}(v)$  with $\det(E)=L_i$ and $L_2=L_1(K_Y)\in \Pic(Y)$ so $c_1=c_1(L_1)=c_2(L_2)\in \Num(Y)$.  Let us recall the following definition:

\begin{Def} A smooth projective surface $X$ is called \emph{unnodal} if it contains no curves of negative self-intersection and \emph{nodal} otherwise, where we reiterate that this does not indicate the presence of singularities in this paper.
\end{Def}

An Enriques surface $Y$ is thus unnodal if it contains no $(-2)$-curves, and these are generic in their moduli space.  An important consequence of this is that the ample cone is entirely round, i.e. $D\in\Pic(Y)$ is ample if and only if $D^2>0$ and it intersects some effective curve positively.  Moreover, $\iota^*$ acts trivially on $\Pic(\tilde{Y})$ in this case, so $\pi^*\Pic(Y)=\Pic(\tilde{Y})$.  An important consequence of this is that for any $E$ with $E\cong E(K_Y)$, $v(E)$ must be divisible by 2 in $\Hal(Y,\Z)$.  Thus for primitive $v$, $M^s_{H,Y}(v)$ is smooth of dimension $v^2+1$.  The following result is proved in \cite{Yos03}:

\begin{Thm}\label{yosh odd} Let $v=(r,c_1,s)\in H^*_{\alg}(Y,\Z)$ be a primitive Mukai vector with $r$ odd and $Y$ unnodal.  Then \[e(M_{H,Y}(v,L))=e(Y^{[\frac{v^2+1}{2}]}),\] for a generic $H$, where $L\in \Pic(Y)$ satisfies $c_1(L)=c_1$.  In particular, 
\begin{itemize}
\item $M_{H,Y}(v)\neq \varnothing$ for a generic $H$ if and only if $v^2\geq -1$.
\item $M_{H,Y}(v,L)$ is irreducible for generic $H$.
\end{itemize}  
\end{Thm}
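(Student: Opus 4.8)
The plan is to reduce the computation, by a sequence of structure-preserving operations, to the rank-one case where the identification with the Hilbert scheme is tautological, and then to propagate the equality of virtual Hodge polynomials along the reduction. Since $v$ is primitive of odd rank and $H$ is generic, every $H$-Gieseker semistable sheaf with Mukai vector $v$ is stable, so $M_{H,Y}(v,L)=M^s_{H,Y}(v,L)$ is a smooth projective variety of pure dimension $d:=v^2+1$ (as recorded before Theorem~\ref{yosh odd}, using unnodality and the oddness of $r$ to rule out the fixed points of $-\otimes\OO(K_Y)$, cf. Theorem~\ref{Kim}). The virtual Hodge polynomial $e$ is motivic: it factors through the Grothendieck ring $K_0(\mathrm{Var}/\C)$, is additive over stratifications, and is multiplicative over Zariski-locally-trivial fibrations. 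It therefore suffices to identify the class $[M_{H,Y}(v,L)]$ with $[Y^{[n]}]$, $n=\tfrac{v^2+1}{2}$, in (a suitable localization of) $K_0(\mathrm{Var}/\C)$.

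The base case is rank one: for $v=(1,c_1,s)$ the assignment $Z\mapsto L\otimes I_Z$ identifies $Y^{[n]}$ with $M_{H,Y}(v,L)$, where $n=\tfrac{v^2+1}{2}$ is the length of $Z$, proving the theorem in this case independently of $H$. For general odd rank I would reduce to rank one by acting on the Mukai lattice $\Hal(Y,\Z)$ with isometries realized by Fourier--Mukai-type equivalences together with twists $-\otimes D$ by line bundles and the duality $E\mapsto E^\vee$. Because the autoequivalence group of $\Db(Y)$ is itself rather rigid, I would realize the rank-lowering isometries on the K3 cover: $\Hal(Y,\Z)$ is the $\iota^*$-invariant part of $\Hal(\Y,\Z)$, and $\iota$-equivariant Fourier--Mukai transforms of $\Db(\Y)$ (abundant, $\Y$ being a K3) induce isometries of this invariant lattice and descend to birational transformations of the Enriques moduli spaces. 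The lattice-theoretic input I need is that the resulting group carries any primitive odd-rank $v$ to a rank-one vector of the same square $v^2$, via a Euclidean-algorithm reduction of the rank using the hyperbolic summand of $\Hal(Y,\Z)$.

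Each such equivalence $\Phi$ sends $H$-stable sheaves of Mukai vector $v$ to $\Phi$-stable complexes of Mukai vector $\Phi(v)$; away from a proper closed subset these are honest shifted sheaves, stable for a polarization $H'$ in an adjacent chamber, so $\Phi$ induces a birational map between $M_{H,Y}(v,L)$ and $M_{H',Y}(\Phi(v),L')$ which is an isomorphism on a dense open set. The decisive and most delicate step---where I expect the real work to lie---is to promote this birational map to an equality of classes in $K_0(\mathrm{Var}/\C)$, hence of virtual Hodge polynomials, since birational smooth projective varieties need not share Hodge numbers. For this I would stratify both moduli spaces according to the cohomological behaviour of $\Phi$ (equivalently, by the Harder--Narasimhan/Jordan--H\"older type of the transform): on the open stratum $\Phi$ is an isomorphism, while every smaller stratum fibers over a moduli space of the same kind but with strictly smaller $v^2$. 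An induction on $v^2$, together with the analogous analysis of the variation of $H$ across a wall (which for primitive $v$ is an isomorphism away from loci of positive codimension whose strata again match), then yields $[M_{H,Y}(v,L)]=[M_{H',Y}(\Phi(v),L')]$. Iterating the rank reduction down to rank one and invoking the base case gives $e(M_{H,Y}(v,L))=e(Y^{[n]})$. The principal obstacle is controlling these exceptional strata precisely enough that their contributions on the two sides genuinely cancel rather than merely agreeing in Euler characteristic; it is exactly here that the rigidity of the Fourier--Mukai correspondence, and the compatibility of the equivariant transforms with descent to $Y$, must be exploited.

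The two corollaries are then immediate. Since $r$ is odd one checks that $v^2$ is odd, so $n=\tfrac{v^2+1}{2}$ is a non-negative integer exactly when $v^2\geq -1$; as $Y^{[n]}\neq\varnothing$ if and only if $n\geq 0$, we obtain $M_{H,Y}(v)\neq\varnothing$ if and only if $v^2\geq -1$ (the extremal case $v^2=-1$ giving $Y^{[0]}=\mathrm{pt}$, a single rigid stable sheaf). Finally, for a smooth projective variety the coefficient of $(xy)^{d}$ in $e$, with $d$ its dimension, counts the top-dimensional components; this coefficient equals $1$ for the irreducible variety $Y^{[n]}$, and since $M_{H,Y}(v,L)$ is smooth of pure dimension $v^2+1=2n$, the equality $e(M_{H,Y}(v,L))=e(Y^{[n]})$ forces it to have a single component, hence to be irreducible.
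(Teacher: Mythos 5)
The first thing to say is that the paper contains no proof of this statement: Theorem \ref{yosh odd} is imported wholesale from \cite{Yos03}, so the only meaningful comparison is with Yoshioka's argument there. In outline your strategy is genuinely the same as his: treat $e$ as a motivic invariant, reduce the rank to one by Fourier--Mukai-type isometries of the Mukai lattice, identify the rank-one moduli space tautologically with $Y^{[n]}$ via $Z\mapsto L\otimes I_Z$, and read off both bullet points from the polynomial identity. The pieces you actually execute are correct: the computation $n=\tfrac{v^2+1}{2}$ in rank one, the parity check that $v^2$ is odd when $r$ is odd (so $n\in\Z_{\geq 0}$ exactly when $v^2\geq -1$), smoothness and purity of dimension of $M_{H,Y}(v,L)$ for primitive odd-rank $v$ (Theorem \ref{Kim} plus unnodality), and the component count via the coefficient of $(xy)^{2n}$.

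But as a proof the proposal has a genuine gap, and it sits exactly where you flag it. Two concrete failures. (i) The assertion that an equivalence $\Phi$ carries $H$-stable sheaves to objects that are, off a proper closed subset, sheaves ``stable for a polarization $H'$ in an adjacent chamber'' is unjustified and not true as stated: Gieseker stability is not preserved by Fourier--Mukai transforms, and the entire technical content of \cite{Yos03} is the introduction of \emph{twisted} Gieseker stability, for which the transform along an elliptic pencil on the Enriques surface itself (not an $\iota$-equivariant transform on the K3 cover --- the absence of a section and the presence of the two half-pencils make even the choice of kernel delicate, and by Theorem \ref{Kim}(c) the cover relates $Y$-moduli only to Lagrangian subvarieties upstairs) preserves stability on the nose in a large-volume limit. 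It is in the gcd bookkeeping of that Euclidean algorithm on (rank, fiber degree) that the hypothesis $r$ odd does real work; in your sketch the oddness of $r$ never enters the reduction except through the parity of $v^2$, which is a warning sign that the reduction as described cannot be complete (for even $r$ the analogous statement fails --- this is why \cite{Hau10} only reduces to ranks $2$ and $4$). (ii) The upgrade from a birational map of moduli spaces to an equality in $K_0(\mathrm{Var}/\C)$ by ``cancelling strata'' is precisely the content of the theorem and is nowhere carried out; since these moduli spaces are not holomorphic symplectic, birational invariance of Hodge numbers is unavailable, as you yourself note. Yoshioka avoids this by arranging honest isomorphisms (for the twisted stability conditions) together with a controlled wall-crossing analysis, rather than hoping for a posteriori cancellation. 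As it stands, the proposal proves the rank-one base case and correctly derives the corollaries from the displayed identity, but the identity itself --- steps (i) and (ii) --- is deferred rather than proved.
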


For even rank Mukai vectors, Hauzer proved the following in \cite{Hau10}:
\begin{Thm}\label{hauz even} Let $Y$ be an unnodal Enriques surface and $v=(r,c_1,s)\in H^*_\alg(Y,\Z)$ a primitive Mukai vector with $r$ even.  Then for generic polarization $H$ we have \[e(M_{H,Y}(v,L))=e(M_{H,Y}((r',c_1',-s'/2),L')),\] where $r'$ is 2 or 4.
\end{Thm}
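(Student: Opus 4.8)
The plan is to split the proof into two largely independent parts: an \emph{invariance} principle, asserting that the virtual Hodge polynomial $e(M_{H,Y}(v,L))$ depends only on the orbit of $v$ under a suitable group $\Gamma$ of operations on the Mukai lattice $\Hal(Y,\Z)$, and a purely lattice-theoretic \emph{reduction}, showing that the $\Gamma$-orbit of any primitive even-rank $v$ of fixed square contains a representative of rank $2$ or $4$. The formal property underlying the invariance is that $e$, being built from compactly supported cohomology, is a motivic invariant: it is additive over locally closed stratifications and multiplicative over products. Hence any isomorphism of moduli spaces preserves it, and any locus stratified by products of moduli spaces of smaller invariants can be handled inductively. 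Throughout I use that $v$ is primitive and $H$ generic, so that $M_{H,Y}(v,L)=M^s_{H,Y}(v,L)$ is smooth of dimension $v^2+1$.

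For the invariance I would take $\Gamma$ to be generated by two kinds of operation. The first is twisting by a line bundle $D\in\Num(Y)$, giving an outright isomorphism $M_{H,Y}(v,L)\cong M_{H,Y}(v\cdot e^{D},L')$ for the appropriate determinant $L'$; this alters $c_1$ but fixes the rank, and visibly preserves $e$. The second, the genuine source of rank reduction, consists of Fourier--Mukai equivalences, which on $Y$ are most cleanly produced by lifting to the K3 cover $\pi\colon\Y\to Y$. An $\iota$-equivariant kernel on $\Y\times\Y$ descends to a transform on $Y$; and by Kim's description (Theorem \ref{Kim}(3)) the pullback $\pi^*$ realizes $M^s_{H,Y}(v)$ as a double cover of the $\iota^*$-fixed Lagrangian inside $M_{H,\Y}(\pi^*v)$. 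An equivariant transform carries this fixed locus isomorphically onto the fixed locus of the transformed vector, so, once stability is controlled, it induces an isomorphism of the associated Enriques moduli spaces and therefore preserves $e$.

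Since the same generic $H$ must appear on both sides of the asserted equality, I would also prove that $e(M_{H,Y}(v,L))$ is \emph{independent of the generic polarization} $H$. This is the wall-crossing step: crossing a single wall induces a birational VGIT transformation that is an isomorphism away from the strictly semistable locus, while the loci created and destroyed are stratified by products of moduli spaces of strictly smaller invariants, whose contributions to $e$ cancel. Granting polarization-independence together with the invariance above, $e$ depends only on the $\Gamma$-orbit of $v$, and it remains to compute this orbit. Using that $\Num(Y)\cong U\oplus E_8(-1)$ for unnodal $Y$, so that $\Hal(Y,\Z)$ has signature $(2,10)$, one reduces any primitive even-rank vector of fixed square into rank $2$ or $4$ by an explicit sequence of the generators of $\Gamma$; that both values occur, rather than a single one, reflects a $\mod 2$ invariant of $v$ preserved by $\Gamma$.

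The main obstacle will be the two analytic inputs hidden in the invariance. Producing a genuinely $\iota$-equivariant Fourier--Mukai transform and, crucially, checking that it carries $H$-semistable sheaves to $H'$-semistable objects for a suitable $H'$---so that one obtains an honest isomorphism of moduli spaces rather than a mere derived equivalence---demands delicate control of stability under a transform that is not a line-bundle twist. Equally demanding is the wall-crossing bookkeeping behind polarization-independence: one must identify exactly the strata appearing at each wall and match their Hodge-polynomial contributions, and it is here that the bulk of the technical work lies.
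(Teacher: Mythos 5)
A point of orientation first: the paper does not prove this statement at all — Theorem \ref{hauz even} is imported verbatim from Hauzer \cite{Hau10} (it is the input that Section 6 combines with Theorem \ref{reduction} and Theorem \ref{four}), so the relevant comparison is with Hauzer's proof, which refines Yoshioka's Fourier--Mukai machinery from \cite{Yos03}. Your overall architecture — invariance of $e$ under a group $\Gamma$ generated by line-bundle twists and Fourier--Mukai transforms, independence of the generic polarization, and a lattice-theoretic reduction of the $\Gamma$-orbit to rank $2$ or $4$ — is indeed the Yoshioka--Hauzer skeleton, and the twist and wall-crossing ingredients are sound (polarization-independence of the virtual Hodge polynomial for these moduli is established by exactly the stratification bookkeeping you describe).

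The genuine gap is in your invariance step as formulated. A Fourier--Mukai transform does not carry $H$-Gieseker-semistable sheaves to $H'$-Gieseker-semistable sheaves for any polarization $H'$; this is not a technicality to be ``controlled'' at the end but the central difficulty, and it is precisely why Yoshioka introduced \emph{twisted} stability (the title of \cite{Yos03}). The actual argument never produces isomorphisms of Gieseker moduli spaces preserved by $\Gamma$: it shows the transform identifies moduli of twisted-semistable sheaves, and then compares twisted with ordinary Gieseker moduli for generic $H$ via locally closed stratifications, equating Hodge polynomials without any isomorphism of spaces. Your specific route through $\iota$-equivariant kernels on the K3 cover has a further problem: by Theorem \ref{Kim}(3) the map $\pi^*$ is a \emph{branched double cover} onto the fixed Lagrangian, not an isomorphism, so a transform preserving fixed loci upstairs identifies the Enriques moduli only up to the $2{:}1$ ambiguity coming from $-\otimes\OO(K_Y)$ — which is exactly why the statement must fix the determinant $L$; Hauzer instead works with transforms defined on $Y$ itself, along its elliptic pencils. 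Finally, your explanation that $r'\in\{2,4\}$ ``reflects a mod $2$ invariant of $v$ preserved by $\Gamma$'' cannot be right as stated (both $2$ and $4$ are even, so no mod $2$ invariant separates them) and you construct no such invariant; in \cite{Hau10} the rank-$4$ residue arises because the Euclidean-type reduction algorithm on the rank and the degree along elliptic pencils can terminate there for arithmetic reasons, not because of a residual orbit invariant. The shape of the target vector $(r',c_1',-s'/2)$, which your sketch does not account for, already signals that the operations used are more than twists and that the exact output is dictated by this explicit algorithm.
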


Non-emptiness of $M_{H,Y}(v)$ with $v^2\geq 1$ was essentially proved in \cite{Kim06} for the case $r(v)=2$.  He proved irreducibility in half of the cases.  

\section{Review: Divisors on Enriques surfaces}
To aid in our investigation of stable rank 4 bundles, we first reduce ourselves to a finite number of cases we must consider.  Since stability is unchanged upon twisting by a line bundle, we are able to reduce to the case when there is a specific relationship between $c_2$ and $c_1^2$.  But first let us recall some facts about divisors on an unnodal Enriques surface $Y$.

To begin with, recall that for an Enriques surface $Y$, $$\Pic(Y)\cong U\oplus E_8\oplus \langle K_Y\rangle,$$ where $U\cong \left(\begin{matrix} 0&1\\1&0\end{matrix}\right)$ is the hyperbolic lattice and $E_8$ is the even positive-definite lattice with the same Dynkin diagram.

For convenience, we record that for a sheaf $E$ of rank $r$, $$h^0(E)-h^1(E)+h^2(E)=\chi(E)=r+\frac{1}{2}c_1(E)^2-c_2(E).$$

The following two simple propositions will be especially useful:
\begin{Prop}[\cite{CD}]\label{effective} Let $D$ be a divisor with $D^2\geq 0$ and $D\ncong 0,K_Y$.  Then $D$ is effective or $-D$ is effective.  If $D$ is effecive, then $D+K_Y$ is also effective.
\end{Prop}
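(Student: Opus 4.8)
The plan is to derive everything from Riemann--Roch together with the two features of an Enriques surface that distinguish it here: $\chi(\OO_Y)=1$ and the fact that $K_Y$ is a numerically trivial $2$-torsion class. Applying the Riemann--Roch formula recorded above to the line bundle $\OO_Y(D)$ (so that $r=1$, $c_1=D$, $c_2=0$), and using $D\cdot K_Y=0=K_Y^2$, I obtain
$$\chi(\OO_Y(D))=1+\tfrac{1}{2}D^2,$$
which is $\geq 1>0$ whenever $D^2\geq 0$. The identical computation applied to $D+K_Y$ gives $\chi(\OO_Y(D+K_Y))=1+\tfrac12 D^2>0$ as well, since $(D+K_Y)^2=D^2$.

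First I would prove the second assertion. Assuming $D$ effective (still under the standing hypotheses $D^2\geq 0$ and $D\ncong 0,K_Y$), positivity of $\chi(\OO_Y(D+K_Y))$ forces $h^0(D+K_Y)>0$ or $h^2(D+K_Y)>0$. By Serre duality $h^2(D+K_Y)=h^0(-D)$; but $-D$ cannot be effective, for otherwise both $D$ and $-D$ would be effective, which would force $D\cong 0$, contrary to hypothesis. Hence $h^2(D+K_Y)=0$, and therefore $h^0(D+K_Y)\geq \chi(\OO_Y(D+K_Y))>0$, so $D+K_Y$ is effective.

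For the first assertion I would again use $\chi(\OO_Y(D))>0$ to get $h^0(D)>0$ or $h^2(D)=h^0(K_Y-D)>0$. In the former case $D$ is effective and we are done. In the latter case $K_Y-D$ is effective, and I note that $K_Y-D$ itself satisfies the standing hypotheses: its square is $D^2\geq 0$, and it is $\cong 0$ or $\cong K_Y$ only when $D\cong K_Y$ or $D\cong 0$, both of which are excluded. Since $K_Y$ is $2$-torsion we have $-K_Y\cong K_Y$, whence $-D\cong (K_Y-D)+K_Y$; applying the already-established second assertion to the effective divisor $K_Y-D$ then shows that $(K_Y-D)+K_Y=-D$ is effective, which completes the argument.

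The proof is essentially bookkeeping once the two Enriques-specific facts are in hand, so I do not expect a serious obstacle. The one point requiring care is the manipulation of the $2$-torsion class $K_Y$ --- in particular the identity $-D\cong (K_Y-D)+K_Y$, which is what converts ``$K_Y-D$ effective'' into ``$-D$ effective'' --- together with keeping careful track of which degenerate cases ($D\cong 0$ and $D\cong K_Y$) the hypotheses are precisely designed to exclude, since it is exactly these that would break the dichotomy.
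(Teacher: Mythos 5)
Your proof is correct, and its core --- the Riemann--Roch dichotomy $h^0(D)+h^0(K_Y-D)\geq 1$, with both options impossible simultaneously because $K_Y$ is not effective --- is exactly the paper's argument for the first claim. The genuine difference lies in the second assertion. The paper deduces ``$D$ effective $\Rightarrow$ $D+K_Y$ effective'' from the duality between the effective cone and the nef cone, i.e.\ it treats effectivity of these classes as a numerical property; you instead reprove it cohomologically, applying Riemann--Roch to $D+K_Y$ and killing $h^2(D+K_Y)=h^0(-D)$ by Serre duality together with the hypothesis $D\ncong 0$. Your route is more self-contained and arguably more careful: cone duality literally identifies the dual of the nef cone with the \emph{closure} of the effective cone, so numerical equivalence a priori preserves only pseudo-effectivity, and promoting that to honest effectivity requires an extra argument of precisely the Riemann--Roch type you supply. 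What the paper's phrasing buys is brevity and the geometric slogan that, for classes of non-negative square on an Enriques surface, effectivity is numerical. After this point the two arguments coincide: both apply the established second claim to the effective divisor $K_Y-D$ and use $2K_Y\cong 0$ to convert ``$K_Y-D$ effective'' into ``$-D$ effective,'' so your reordering (second claim first) is a cosmetic, not structural, change.
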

\begin{proof} As the proof is simple, we include it here.  First note that the effective cone is dual to the nef cone, so $D$ effective implies that $D+K_Y$ is effective as they are numerically equivalent, proving the final claim.  By Riemann-Roch, $$h^0(D)-h^1(D)+h^2(D)=\frac{1}{2}D^2+1\geq 1,$$ so $$h^0(D)+h^0(-D+K_Y)=h^0(D)+h^2(D)\geq 1.$$  Thus either $D$ is effective or $-D+K_Y$ is effective, but not both since then $K_Y$ would be effective.  If $-D+K_Y$ is effective, then $-D$ must be as well, hence the first claim.
\end{proof}

\begin{Def}\label{phi} For any divisor $D$ with $D^2>0$, we define $$\phi(D)=\inf\{|D\cdot F| |F\in \Pic(Y),F^2=0\}.$$
\end{Def}

The most important property of $\phi$ for us is the following \cite[Section 2.7]{CD}:
\begin{Thm}\label{phi} $0<\phi(D)^2\leq D^2$.
\end{Thm}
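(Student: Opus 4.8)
The plan is to prove the inequality $0 < \phi(D)^2 \leq D^2$ for a divisor $D$ with $D^2 > 0$ by exploiting the rich lattice structure of $\Pic(Y) \cong U \oplus E_8 \oplus \langle K_Y\rangle$. The key observation is that $\phi(D)$ is defined as an infimum over isotropic classes $F$ (those with $F^2 = 0$), and the hyperbolic summand $U$ guarantees that such isotropic classes exist in abundance. First I would establish that the infimum is actually attained: since $\phi(D)$ is defined using absolute values $|D\cdot F|$ of integers, and since (replacing $F$ by $-F$ if necessary) we may restrict to $F$ with $D\cdot F \geq 0$, the set of values is a subset of $\Z_{\geq 0}$ and hence has a minimum. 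I would also note that $\phi(D) > 0$: if there existed an isotropic $F \neq 0$ with $D\cdot F = 0$, then the sublattice spanned by $D$ and $F$ would be degenerate or would force $D^2 \leq 0$ via the Hodge index theorem, contradicting $D^2 > 0$. This handles the strict lower bound $0 < \phi(D)^2$.

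The heart of the argument is the upper bound $\phi(D)^2 \leq D^2$. Here I would use the Hodge index theorem, which says the intersection form on $\NS(Y)$ (modulo torsion) has signature $(1, \rho-1)$. Fix an isotropic $F$ achieving $\phi(D) = |D\cdot F|$ (we may take $D \cdot F = \phi(D) > 0$). The plan is to consider the rank-two sublattice generated by $D$ and $F$, whose Gram matrix is
\begin{equation*}
\begin{pmatrix} D^2 & D\cdot F \\ D\cdot F & 0 \end{pmatrix},
\end{equation*}
with determinant $-(D\cdot F)^2 = -\phi(D)^2 < 0$. If this sublattice has rank two, the signature within it is $(1,1)$, consistent with Hodge index. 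The trick is to produce a contradiction if $\phi(D)^2 > D^2$ by constructing a new isotropic class with smaller positive intersection against $D$. Specifically, I would look at classes of the form $G = D - tF$ for suitable integer $t$ and compute $G^2 = D^2 - 2t\,\phi(D)$; choosing $t$ so that $G^2$ is made as small in absolute value as possible should yield an isotropic or near-isotropic class contradicting the minimality of $\phi(D)$ unless $\phi(D)^2 \leq D^2$.

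More carefully, the cleanest route uses the existence of an isotropic vector in $U$ and the unimodularity/structure of the lattice. Writing $D$ in coordinates adapted to $U \oplus E_8$, one shows directly that $D$ pairs with the standard isotropic generators of $U$ to give values bounded by a quantity controlled by $D^2$; combined with the minimality defining $\phi$, this forces $\phi(D)^2 \leq D^2$. This is essentially the content of the reference \cite[Section 2.7]{CD}, so I would either cite it or reconstruct the short lattice-theoretic computation.

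The main obstacle I anticipate is the upper bound, and specifically the verification that the infimum defining $\phi(D)$ is controlled by $D^2$ rather than being arbitrarily large. The subtlety is that one must guarantee the existence of an isotropic class $F$ with $|D\cdot F|$ genuinely small; abstractly the infimum could a priori be large. The resolution relies crucially on the hyperbolic summand $U$ and the Hodge index signature $(1, \rho-1)$: any divisor $D$ with $D^2 > 0$ lies in the positive cone, and the indefinite lattice structure always supplies isotropic directions making acute angles with $D$. Making the inequality $\phi(D)^2 \leq D^2$ sharp amounts to the classical fact that for an even hyperbolic lattice, a primitive positive vector has an isotropic partner whose pairing is bounded by its norm. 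I would treat this lattice-theoretic estimate as the technical core and would be prepared to invoke \cite{CD} directly rather than reprove it, since the statement is presented in the excerpt as a citation.
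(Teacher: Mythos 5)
You are in a slightly unusual situation: the paper offers no proof of this theorem at all---it is quoted directly from \cite[Section 2.7]{CD}---so your closing decision to ``invoke \cite{CD} directly rather than reprove it'' coincides exactly with what the paper does. Your argument for strict positivity is also fine: by the Hodge index theorem $D^{\perp}$ is negative definite when $D^2>0$, so no numerically nontrivial isotropic class can be orthogonal to $D$ (one must of course read the definition of $\phi$ as ranging over $F\not\equiv 0$; taken literally, with $F=0$ or $F=K_Y$ allowed, the infimum would always be $0$).

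The genuine gap lies in the two sketches you offer for the upper bound $\phi(D)^2\leq D^2$, and neither can be repaired into the ``short lattice-theoretic computation'' you anticipate. For the first sketch, $G=D-tF$ has $G^2=D^2-2t\,\phi(D)$, which vanishes only when $t=D^2/2\phi(D)$ happens to be an integer; ``near-isotropic'' classes do not enter the definition of $\phi$ and so cannot contradict minimality, and even in the lucky integral case minimality applied to $G$ yields only $\phi(D)\leq D^2/2$, which is weaker than $\phi(D)\leq\sqrt{D^2}$ once $D^2\geq 4$. The second sketch is simply false: $D$ need not pair with the isotropic generators of the $U$-summand in a way controlled by $D^2$. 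Take $D=10f+10g+D_0$, where $f,g$ are the standard isotropic generators of $U$ and $D_0$ lies in the $E_8$-summand with $D_0^2=-198$ (that summand is negative definite by Hodge index and represents every negative even integer). Then $D^2=2$, yet every isotropic class contained in the $U$-summand is a multiple of $f$ or $g$ and pairs with $D$ in multiples of $10$; the class computing $\phi(D)=1$ necessarily has a nonzero $E_8$-component. Relatedly, the ``classical fact\dots for an even hyperbolic lattice'' you invoke is not a fact: in the even hyperbolic lattice with Gram matrix $\left(\begin{smallmatrix}0&3\\3&0\end{smallmatrix}\right)$ the vector $D=f+g$ has $D^2=6$ but $\phi(D)=3$. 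The inequality $\phi(D)^2\leq D^2$ is a special property of the Enriques lattice $U\oplus E_8$, and the proof in \cite{CD} genuinely uses the structure of the $E_8$-summand (in effect, that its covering radius equals $1$), not merely evenness, unimodularity, and signature. So here citing \cite{CD} is not a stylistic shortcut but a mathematical necessity---which is presumably why the paper does exactly that.
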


The importance of effective divisors of square zero in the geometry of Enriques surfaces lies partially in the fact that they correspond to elliptic pencils.  In particular, on an unnodal Enriques surface, a primitive effective divisor $F$ with $F^2=0$ is irreducible and $2F$ is one of two double fibers of the elliptic pencil $|2F|$.  Likewise, all complete elliptic pencils arise this way.  Such an $F$ is called a half-pencil.  The following final fact will also be of use to us:

\begin{Prop}[\cite{CD}]\label{pencil} For every elliptic pencil $|2E|$ on an Enriques surface $Y$, there exists an elliptic pencil $|2F|$ such that $E.F=1$.
\end{Prop}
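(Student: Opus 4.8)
The plan is to reduce the statement to pure lattice theory in $\Num(Y)$ and then reinstate the geometry at the very end. From the stated isomorphism $\Pic(Y)\cong U\oplus E_8\oplus\langle K_Y\rangle$ together with the fact that $\langle K_Y\rangle$ is exactly the kernel of $\Pic(Y)\to\Num(Y)$, I would first record that $\Num(Y)\cong U\oplus E_8$. Both $U$ and $E_8$ are even and unimodular, so $\Num(Y)$ is an \emph{even unimodular} lattice of rank $10$. I would also note that the class of the half-pencil $E$ is primitive and isotropic in $\Num(Y)$ (this is the standard fact that half-fibers of an elliptic pencil are primitive isotropic classes).

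The arithmetic core is then short. Since $E$ is primitive in the unimodular lattice $\Num(Y)$, the linear functional $x\mapsto E\cdot x$ is surjective onto $\Z$: indeed, if its image were $d\Z$, then $E/d$ would lie in $\Num(Y)^\ast=\Num(Y)$, forcing $d=1$ by primitivity. Pick $x\in\Num(Y)$ with $E\cdot x=1$. Because the lattice is even, $x^2=2k$ for some $k\in\Z$, and I set $F:=x-kE$. A direct computation gives $F^2=x^2-2k(E\cdot x)+k^2E^2=2k-2k=0$ and $E\cdot F=E\cdot x-kE^2=1$; moreover $E\cdot F=1$ forces $F$ to be primitive in $\Num(Y)$.

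It remains to promote the numerical class $F$ to an honest elliptic pencil, and this is where the geometry of the unnodal surface enters. Lift $F$ to $\Pic(Y)$; since $E\cdot F=1$ the class $F$ is nonzero in $\Num(Y)$, so the lift is $\not\sim 0,K_Y$, and Proposition \ref{effective} applies: either $F$ or $-F$ is effective. I rule out the second possibility by a sign argument. On an unnodal $Y$ a primitive effective isotropic divisor is irreducible, so if $-F$ were effective it would be an irreducible curve; two irreducible curves meet nonnegatively, whence $E\cdot(-F)\geq 0$, contradicting $E\cdot(-F)=-1$. Therefore $F$ itself is effective. Being primitive, effective, and of square zero on an unnodal surface, $F$ is a half-pencil, so $|2F|$ is an elliptic pencil, and by construction $E\cdot F=1$, as required.

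The only genuinely nontrivial step is the last one: converting the lattice vector $F$ into a geometrically effective half-pencil. This is precisely where the unnodal hypothesis and Proposition \ref{effective} are indispensable, since on a nodal surface the effective and nef cones are not dual in the same clean way and the sign argument can fail. Everything preceding it is routine lattice bookkeeping, so I expect the write-up to be dominated by carefully justifying the primitivity of $E$ in $\Num(Y)$ and the irreducibility dichotomy used in the final contradiction.
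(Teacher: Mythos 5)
Your proof is correct and follows essentially the same route as the paper: the core construction $F:=F'-\tfrac{F'^2}{2}E$, using unimodularity of $\Num(Y)$ (plus primitivity of $E$) to find $F'$ with $E\cdot F'=1$ and evenness to make $F$ isotropic, is exactly the paper's argument. The only difference is that you additionally verify effectiveness of $F$ (via Proposition \ref{effective} and irreducibility of half-pencils on an unnodal surface), a step the paper's terse proof leaves implicit, so this is an amplification rather than a different method.
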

\begin{proof} As $\Num(Y)$ is unimodular, we can find $F'\in\Pic(Y)$ such that $E.F'=1$.  Then since $\Num(Y)$ is even, $F:=F'-\frac{F'^2}{2}E\in\Pic(Y)$, $F^2=0$ and $F.E=1$, so $|2F|$ is the required elliptic pencil.
\end{proof}
\section{Classification of chern classes}

Now we can start in on the classification proper with the following prelimanary result:

\begin{Lem}\label{divisor} For any divisor $L$ on an unnodal Enriques surface $Y$, we can find a divisor $S$ such that $0\leq (L-4S)^2\leq 54$ and $L-4S>0$.
\end{Lem}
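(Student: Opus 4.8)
The plan is to argue numerically and to exploit a hyperbolic plane inside $\Num(Y)=U\oplus E_8$ coming from an elliptic pencil. First I would observe that only the numerical class of $L-4S$ matters: its self-intersection is a numerical invariant, and by the second assertion of Proposition~\ref{effective} a class of nonnegative square is represented by an effective divisor if and only if its twist by $K_Y$ is, so the $\langle K_Y\rangle$-component of $L$ (which $4S$ can never alter, since $4K_Y=0$) is irrelevant. I would then fix any elliptic pencil $|2E|$, so that $E$ is a primitive effective class with $E^2=0$, and invoke Proposition~\ref{pencil} to produce a half-pencil $F$ with $E\cdot F=1$. Since $\langle E,F\rangle\cong U$ is unimodular, $L$ decomposes as $L=aE+bF+w$ with $a=L\cdot F$, $b=L\cdot E$ and $w=L-aE-bF$ orthogonal to $E,F$; because the orthogonal complement of this hyperbolic plane is negative definite, $c:=w^2\le 0$.

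Next I would use the freedom in $S$ to reduce the $U$-coefficients modulo $4$. Taking $S=s_1E+s_2F$ replaces $(a,b)$ by $(a-4s_1,b-4s_2)$, so I may prescribe any $a'\equiv a$ and $b'\equiv b\pmod 4$. I would choose $a'\in\{1,2,3,4\}$, and then choose $b'>0$ with $b'\equiv b\pmod 4$ so that $2a'b'$ falls in the window $[-c,\,54-c]$. This is possible because, as $b'$ ranges over the positive integers in its residue class, the values $2a'b'$ form an arithmetic progression tending to $+\infty$ with common difference $8a'\le 32$, which is less than the length $54$ of the window; since the first term is at most $32$ and consecutive terms cannot leap over an interval this long, some term lies in it (one can in fact always land in $[0,32]$, but $[0,54]$ suffices). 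Putting $D:=a'E+b'F+w$ then gives $0\le D^2=2a'b'+c\le 54$.

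It remains to secure effectivity with the correct sign. Since $D\cdot E=b'>0$ and $D\cdot F=a'>0$, the class $D$ is nonzero and satisfies $D\cdot(E+F)>0$; here $E+F$ is nef, the ample cone being round on the unnodal surface $Y$, and $(E+F)^2=2$. As $D^2\ge 0$ and $D\ncong 0,K_Y$, Proposition~\ref{effective} gives that $D$ or $-D$ is effective, and the inequality $(-D)\cdot(E+F)<0$ excludes $-D$; hence $D$ is effective, and by Proposition~\ref{effective} again so is $D+K_Y$. Taking $S=s_1E+s_2F$ with $s_1=(a-a')/4$ and $s_2=(b-b')/4$, the divisor $L-4S$ has numerical class equal to that of $D$, so it equals $D$ or $D+K_Y$ in $\Pic(Y)$; either way it is effective, i.e.\ $L-4S>0$, with $(L-4S)^2=D^2\in[0,54]$.

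The argument is elementary once the hyperbolic plane is in place, and I expect the only delicate point to be the combined requirement of landing the square in the target interval while keeping \emph{both} coefficients $a',b'$ positive, so that $L-4S$ comes out effective rather than anti-effective. Packaging this arithmetic-progression-meets-interval step cleanly, together with the observation that the negative-definite ($E_8$) part of $L$ never needs to be modified — all the flexibility being supplied by the tunable product $2a'b'$ in the indefinite $U$-summand — is where the real (if modest) content lies.
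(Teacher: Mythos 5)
Your argument is correct, but it is genuinely different from the one in the paper. The paper's proof is a descent: after replacing $L$ by $L+4nH$ to make it effective of positive square, it invokes Theorem \ref{phi} (the Cossec--Dolgachev bound $0<\phi(L)^2\leq L^2$) to find, at each stage, an isotropic effective $F$ with $0<L.F\leq\sqrt{L^2}$, and replaces $L$ by $L-4F$; this keeps the class effective (via $(L-4F).F=L.F>0$ and Proposition \ref{effective}) while strictly decreasing the square, with a separate small-square case $56\leq L^2\leq 62$ to finish inside $[0,54]$. You instead fix once and for all a hyperbolic plane $\langle E,F\rangle\cong U$ spanned by effective isotropic classes (Proposition \ref{pencil}), split $L=aE+bF+w$, and do all the work by adjusting $(a,b)$ modulo $4$, landing $2a'b'+w^2$ in the window $[0,54]$ by an arithmetic-progression argument; effectivity then comes from Proposition \ref{effective} together with pairing against the ample class $E+F$. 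Your route entirely avoids Theorem \ref{phi}, which is the one nontrivial external input in the paper's proof, replacing it by the Hodge index theorem (which is what justifies your claim $w^2\leq 0$: the complement of $U$ in $\Num(Y)$ has signature $(0,8)$ --- note this is negative definite, notwithstanding the paper's Section 3 recording the $E_8$ summand with the wrong sign) and by the elementary observation that steps of size $8a'\leq 32$ cannot jump over an integer window of length $54$. What the paper's approach buys is brevity on the page and no need to choose coordinates; what yours buys is a self-contained, single-step, constructive proof that moreover makes clear that only the $U$-components of $L$ ever need modification, and that the constant $54$ could even be sharpened (your parenthetical $[0,32]$). The one point to state carefully in a final write-up is the lift from $\Num(Y)$ to $\Pic(Y)$, which you handle correctly via the second assertion of Proposition \ref{effective}.
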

\begin{proof} Since $(L+4H)^2>0$ for some ample divisor, we can assume $L$ is effective and $L^2>0$.  Suppose $L^2\geq 64$, then by Theorem \ref{phi} we can find $F>0$ with $F^2=0$ such that $0<L.F\leq \sqrt{L^2}$, and in this case $0<8L.F\leq8\sqrt{L^2}\leq L^2$.  Now $(L-4F)^2=L^2-8L.F$, so $$0\leq (L-4F)^2<L^2.$$  If $56\leq L^2\leq 62$, then we can find $F>0$ with $F^2=0$ such that $0<L.F\leq 7$, so $0\leq (L-4F)^2<L^2$.  In both cases, $(L-4F).F=L.F>0$, so $L-4F>0$ by Proposition \ref{effective}.
\end{proof}

We use this to prove the following theorem:

\begin{Thm}\label{reduction} We can find a divisor $D$, depending only on $L\in \Pic(Y)$ and $m\in \Z$, such that for any rank 4 vector bundle $E$ with $c_1(E)=L$ and $c_2(E)=m$ on an unnodal Enriques surface $Y$, $$c_2(E(D))=\frac{1}{2}c_1(E(D))^2+k,$$ where $k\in\{-5,...,1\}$.  If $c_1(E(D))^2\geq 0$, then $c_1(E(D))>0$.
\end{Thm}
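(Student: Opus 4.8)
The plan is to convert the statement into a pigeonhole problem about intersection numbers with an elliptic half-pencil, using Lemma~\ref{divisor} precisely to guarantee that such a half-pencil can be found with intersection number at most $7$. First I would record the effect of twisting on Chern classes: for a rank-$4$ bundle with $c_1(E)=L$ and $c_2(E)=m$ one has $c_1(E(D))=L+4D$ and $c_2(E(D))=m+3\,L\cdot D+6\,D^2$, whence
$$c_2(E(D))-\tfrac12 c_1(E(D))^2=\Big(m-\tfrac12 L^2\Big)-\big(L\cdot D+2D^2\big).$$
The key observation is that this number depends only on $L$, $m$ and $D$, never on $E$ itself; so it suffices to produce one divisor $D=D(L,m)$ making it lie in $\{-5,\dots,1\}$, and the conclusion then holds simultaneously for every bundle with these invariants. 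Thus the goal becomes: find $D$ with $L\cdot D+2D^2$ in a window of seven consecutive integers determined by $L$ and $m$.

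Next I would normalize $L$. By Lemma~\ref{divisor} there is $S$ with $L-4S>0$ and $0\le(L-4S)^2\le 54$; absorbing $-S$ into $D$ and relabelling, I may assume $L>0$ and $0\le L^2\le 54$, and I set $a:=m-\tfrac12 L^2\in\Z$. The bound $54<64=8^2$ is exactly what is needed: if $L^2>0$ then Theorem~\ref{phi} gives $\phi(L)^2\le L^2\le 54$, so $1\le\phi(L)\le 7$. Using Proposition~\ref{effective} to fix signs, I choose an effective half-pencil $F$ (so $F^2=0$) with $n:=L\cdot F=\phi(L)\in\{1,\dots,7\}$ and take $D=tF$; then $L\cdot D+2D^2=tn$, and since $n\le 7$ every window of seven consecutive integers contains a multiple of $n$, so some $t\in\Z$ lands it in $\{a-1,\dots,a+5\}$, i.e.\ $k\in\{-5,\dots,1\}$.

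The boundary value $L^2=0$ must be treated separately, since there $\phi$ is undefined. Here $L=cF_1$ for a primitive half-pencil $F_1$ and some $c\ge 1$; picking by Proposition~\ref{pencil} a half-pencil $G$ with $F_1\cdot G=1$ and twisting by $D=sF_1+tG$, the same invariant becomes $L\cdot D+2D^2=t(c+4s)$, and choosing $s$ with $n_0:=c+4s\in\{1,2,3,4\}$ reduces to the previous pigeonhole with $n_0\le 7$. In every case the resulting $L':=c_1(E(D))$ pairs strictly positively with an effective half-pencil ($L'\cdot F=n>0$, respectively $L'\cdot G=n_0>0$), so $L'\neq 0,K_Y$; hence Proposition~\ref{effective} forces $L'>0$ whenever $L'^2\ge 0$, which is the final assertion.

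The main obstacle I foresee is matching the two numerical thresholds: verifying that the constant $54$ coming from Lemma~\ref{divisor} really does yield $\phi(L)\le 7$ (this is where $54<64$ is used), and that the seven-element target $\{-5,\dots,1\}$ is exactly large enough for the multiples of $n\le 7$ to hit it. The only genuinely separate point is the case $L^2=0$, which the $\phi$-argument does not cover and which requires the auxiliary second pencil above; overlooking it is the easiest way for such a proof to be incomplete.
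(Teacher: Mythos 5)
Your proof is correct and follows essentially the same route as the paper's: normalize $L$ via Lemma~\ref{divisor} to get $0\le L^2\le 54$ and $L>0$, use Theorem~\ref{phi} (with $54<64$) to find a half-pencil $F$ with $1\le L\cdot F\le 7$, run the pigeonhole on the window $\{-5,\dots,1\}$ by twisting along $F$, handle $L^2=0$ separately via Proposition~\ref{pencil}, and conclude effectivity from Proposition~\ref{effective}. The only cosmetic differences are that you write out the invariant $c_2-\tfrac12 c_1^2$ for an arbitrary twist $D$ up front (making the independence from $E$ explicit) and merge the paper's two-step twist in the $L^2=0$ case into the single divisor $sF_1+tG$.
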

\begin{proof} First let us note that for any divisor $T$, $c_1(E(-T))=c_1(E)-4T$.  Then by Theorem \ref{phi} and Lemma \ref{divisor} above, we can find a divisor $T$ such that $0\leq c_1(E(-T))^2\leq 54$ and $c_1(E(-T))>0$.  

If $c_1(E(-T))^2>0$, then we can find $F>0$ with $F^2=0$ such that $0<c_1(E(-T)).F\leq 7$.  Then $$c_1(E(-T+nF))^2=(c_1(E(-T))+4nF)^2=c_1(E(-T))^2+8n(c_1(E(-T)).F),$$ and $$c_2(E(-T+nF))=c_2(E(-T))+3n(c_1(E(-T)).F).$$  Thus $$c_2(E(-T+nF))-\frac{1}{2}c_1(E(-T+nF))^2=c_2(E(-T))-\frac{1}{2}c_1(E(-T))^2-n(c_1(E(-T)).F).$$ Since $c_1(E(-T)).F=1,...,7$ we may find $n\in\Z$ which puts us in the range $-5,...,1$.  If $c_1(E(-T+nF))^2\geq 0$, then $$c_1(E(-T+nF)).F=(c_1(E(-T))+4nF).F=c_1(E(-T)).F>0,$$ so $c_1(E(-T+nF))$ is effective by Proposition \ref{effective}.

Now suppose $c_1(E(-T))^2=0$.  Then since $c_1(E(-T))>0$, $c_1(E(-T))=mH$ with $m>0$ such that $|2H|$ forms an elliptic pencil.  Take $F>0, F^2=0$ such that $F.H=1$.  If $4|m$, then $c_1(E(-T-\frac{m-4}{4}H))=4H$, so $c_1(E(-T-\frac{m-4}{4}H)).F=4$ and we're in one of the previous cases.  If $m\equiv j \mod 4$ with $j=1,2,3$, then $c_1(E(-T-\frac{m-j}{4}H)).F=j$, and we're in one of the above cases.  The resulting divisor is effective in any event.
\end{proof}

\begin{Rem} The above theorem indeed reduces the work to a finite number of cases.  It's entirely possible that a smaller number of cases is sufficient upon a closer look at the arithmetic of divisors on Enriques surfaces.
\end{Rem}

\section{Existence results in rank four}
\subsection{Numerical constraints} Let us summarize what the above section allows us to do.  By twisting by an appropriate line bundle, we can force $c_1$ and $c_2$ of a rank 4 sheaf to satisfy the relation $c_2-\frac{1}{2}c_1^2=k$ for $-5\leq k\leq 1$, making the Mukai vector $v=(4,c_1,2-k)$.  The existence of a stable coherent sheaf with these chern classes forces the dimension of the appropriate moduli space $M:=M_{H,Y}(v)$ to be non-negative.  Thus we get the following restrictions on $c_2$ and $c_1^2$:
\begin{equation}\label{table}
\begin{tabular}{c c c c}
$k$ & $\dim M$ & $\min c_2$ & $\min c_1^2$ \\ 
\hline
1&$2c_2-9$&5&8 \\
0&$2c_2-15$&8&16 \\
-1&$2c_2-21$&11&24 \\
-2&$2c_2-27$&14&32 \\
-3&$2c_2-33$&17&40 \\
-4&$2c_2-39$&20&48 \\
-5&$2c_2-45$&23&56 \\
\end{tabular}
.\end{equation}

Notice that by choosing $k$ in this range we force $c_1^2>0$, so by Theorem \ref{reduction} $c_1$ can be assumed to be effective and thus ample since $Y$ is unnodal.  Moreover, $v(E\otimes \mathcal O(D))=v(E)\cdot e^D$ is an isometry of the Mukai lattice, so it preserves the property of being primitive.  The assumption that our original Mukai vector was primitive allows us to use the usual lower bound $v^2+1$ as the dimension of $M$ above: coherent sheaves $E$ with $E\cong E\otimes \OO(K_Y)$ would have Mukai vector divisible by 2, as noted above.  In this section we prove the following existence result:

\begin{Thm}\label{four} Let $v=(4,c_1,2-k)$ be a primitive Mukai vector, with $-5\leq k\leq 1$ and $H:=c_1$ ample.  Assume that $c_1^2$ satisfies the inequality in (\ref{table}) for the corresponding $k$.  Then there exists a $\mu_H$-stable vector bundle with Mukai vector $v$ on an unnodal Enriques surface $Y$.
\end{Thm}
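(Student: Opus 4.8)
The plan is to produce $E$ through the Serre correspondence, in the spirit of the proof of \cite[Theorem 5.1.6]{HL}, but to use the rank-$10$ Picard lattice $U\oplus E_8\oplus\langle K_Y\rangle$ to arrange every numerical input by hand rather than by letting $c_2\to\infty$. Concretely, I would look for $E$ as the middle term of an extension
\begin{equation}\label{serre-ext}
0\to\OO_Y(A_1)\oplus\OO_Y(A_2)\oplus\OO_Y(A_3)\to E\to I_Z\otimes\OO_Y(B)\to 0,
\end{equation}
with $Z\subset Y$ a zero-dimensional subscheme and $A_1,A_2,A_3,B\in\Pic(Y)$ chosen so that $A_1+A_2+A_3+B=c_1$; this forces $c_1(E)=c_1$, while the length of $Z$ is pinned down by the requirement $c_2(E)=\tfrac12 c_1^2+k$ read off from (\ref{table}). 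The first step is purely lattice-theoretic: using the splitting of $\Pic(Y)$ together with Theorem \ref{phi}, I would choose the $A_i$ \emph{balanced}, namely with $H\cdot A_i$ strictly below the slope $\mu_H(E)=\tfrac14 c_1^2$, so that automatically $H\cdot B=c_1^2-\sum_i H\cdot A_i$ lies just above $\tfrac14 c_1^2$ by a small controlled amount $\delta:=H\cdot B-\tfrac14 c_1^2>0$. Balancing does double duty: it keeps the twists $B-A_i$ small, and it is exactly the slope configuration stability will want.

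The second step is \emph{local freeness}. Serre duality identifies the groups governing \eqref{serre-ext}, $\Ext^1(I_Z(B),\OO_Y(A_i))\cong H^1\bigl(I_Z\otimes\OO_Y(B-A_i+K_Y)\bigr)^\vee$, and the point of keeping $B-A_i$ small is that $\ell(Z)\sim\tfrac12 c_1^2$ then exceeds $h^0$ of these modest linear systems, so these groups are already nonzero for $Z$ in general position. The sheaf $E$ is locally free precisely when the three extension classes have no common zero on $Z$ — the vector of their local evaluations at each point of $Z$ must be nonzero, which is the higher-rank form of the Cayley--Bacharach condition. For reduced $Z$ this amounts to the evaluation maps $\Ext^1(I_Z(B),\OO_Y(A_i))\to\C^{\ell(Z)}$ having jointly large enough image for a generic triple to be nowhere vanishing; the minimal values of $c_1^2$ in (\ref{table}), together with the ampleness of $H=c_1$ (which gives the vanishings $h^1(\OO_Y(B-A_i))=0$ needed to evaluate everything by Riemann--Roch), provide the room. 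I would isolate this as a lemma.

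The real work, and the step I expect to be the main obstacle, is $\mu_H$-\emph{stability} of a generic such $E$, \emph{without} the asymptotic $c_2\gg 0$ on which the usual arguments rest. I would argue by contradiction from a saturated destabilizing subsheaf $F\subset E$ of rank $\rho\in\{1,2,3\}$, analysing it through \eqref{serre-ext}. Writing $F'=F\cap\bigl(\bigoplus_i\OO_Y(A_i)\bigr)$ and letting $\bar F$ be the image of $F$ in $I_Z(B)$, so that $0\to F'\to F\to\bar F\to 0$, there are two cases. If $\bar F=0$ then $F\subseteq\bigoplus_i\OO_Y(A_i)$, and since each $H\cdot A_i<\tfrac14 c_1^2$ any rank-$\rho$ subsheaf has slope below the average of the $\rho$ largest $H\cdot A_i$, hence below $\mu_H(E)$ — no destabilization. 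If $\bar F\ne 0$ it is a rank-one subsheaf $I_W\otimes\OO_Y(M)$ of $I_Z(B)$, so $N:=B-M$ is effective and a defining section of $\OO_Y(N)$ constrains $Z$. Combining $H\cdot c_1(F')\le$ (the sum of the $\rho-1$ largest $H\cdot A_i$) with $H\cdot M=H\cdot B-H\cdot N$, the destabilizing inequality $H\cdot c_1(F)\ge\tfrac{\rho}{4}c_1^2$ collapses, after using $H\cdot A_i<\tfrac14 c_1^2$ and $H\cdot B=\tfrac14 c_1^2+\delta$, to the single uniform bound $0<H\cdot N<\delta$, independent of $\rho$.

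It remains to contradict the existence of such an $N$. Because $Y$ is unnodal every effective divisor has non-negative self-intersection, so $N^2\ge 0$, and the Hodge index theorem gives $N^2\le(H\cdot N)^2/c_1^2<\delta^2/c_1^2$, while Theorem \ref{phi} bounds from below the intersection numbers that can occur; together these confine $N$ to a short finite list of classes, each with $h^0(\OO_Y(N))\le\tfrac12 N^2+1$ small. But for $Z$ general of length $\ell(Z)=\tfrac12 c_1^2+k$, no curve in any such $|N|$ can contain $Z$, since $h^0(\OO_Y(N))$ falls far below $\ell(Z)$; this is the desired contradiction, and it is exactly here that the minimal value of $c_1^2$ recorded in (\ref{table}) for each $k$ is used. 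The genuine difficulty is that, with $c_2$ at the threshold allowed by (\ref{table}), the Hodge-index bound $N^2<\delta^2/c_1^2$, the smallness of $\delta$ coming from the balancing, and the length of $Z$ must all be played off against one another with essentially no slack; keeping these constraints simultaneously satisfiable across all seven values of $k$ is the crux that the numerical thresholds in (\ref{table}) are engineered to secure.
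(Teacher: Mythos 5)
Your construction is the mirror image of the paper's, and the reversal is where it breaks. The paper takes a single very negative line bundle $\OO(-2H)$ as the sub and the direct sum $\bigoplus_{i=1}^3 I_{Z_i}(H)$ as the quotient; you take the three line bundles as the sub and a single $I_Z(B)$ as the quotient, with all slopes clustered within $\delta$ of $\mu_H(E)$. The serious gap is in your stability step, for destabilizing ranks $\rho=2,3$. There, $F'=F\cap\bigl(\bigoplus_i\OO(A_i)\bigr)$ is locally free, but the rank-one image $\bar F=F/F'$ is only torsion-free, so $\bar F\cong I_{W'}(B-N)$ for some finite subscheme $W'$, and its inclusion into $I_Z(B)$ is multiplication by $s\in H^0(\OO_Y(N))$ subject only to $s\cdot I_{W'}\subseteq I_Z$, i.e.\ $Z\subseteq W'\cup\mathrm{div}(s)$. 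The scheme $W'$ changes $c_2(F)$ but not $c_1(F)$, so it is invisible to every slope inequality and can absorb any or all points of $Z$; hence your conclusion ``a curve in $|N|$ contains $Z$'' does not follow. Moreover the slope computation only yields $0\le H\cdot N<\delta$, not $0<H\cdot N$: the case $N=0$, $\bar F=I_{W'}(B)$ with $W'\supseteq Z$, is permitted, and then there is no curve at all. (Only for $\rho=1$ is your reduction sound: a saturated rank-one subsheaf of $E$ is a line bundle, and a map from a line bundle into $I_Z(B)$ is honestly a section of $I_Z(N)$, forcing $N\neq0$ and a curve through all of $Z$.) Excluding the $\rho=2,3$ configurations is not a genericity statement about $Z$ versus small linear systems $|N|$; it is the statement that the generic class $e\in\bigoplus_i\Ext^1(I_Z(B),\OO(A_i))$ admits no partial splitting after pulling back along $\bar F\to I_Z(B)$ and pushing out along $V\to V/F'$, which requires a dimension count over the family of data $(F'\subset V,\,N,\,s,\,Z=Z_1\dotcup Z_2)$ that your proposal does not contain. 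The paper's lopsided choice is engineered exactly so this never arises: a destabilizing subsheaf there has its determinant mapping into $I_{Z_{i_1}\cup\cdots\cup Z_{i_s}}(3sH)$, because the ideal sheaves sit as direct summands of the \emph{quotient}, so one genuinely gets an effective divisor of degree $\le\frac{3s}{4}H^2$ through the entire subschemes $Z_{i_j}$, and the flag-Hilbert-scheme count closes the argument.

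There is also a numerical error that removes the room your counts rely on. Computing $c_2$ of your extension gives $c_2(E)=\sum_{i<j}A_iA_j+(\sum_iA_i)\cdot B+\ell(Z)$, so the requirement $c_2(E)=\frac12c_1^2+k$ pins down $\ell(Z)=\frac12\bigl(\sum_iA_i^2+B^2\bigr)+k$. Your balancing forces $H\cdot A_i\approx H\cdot B\approx\frac14H^2$, and the Hodge index theorem then gives $A_i^2,B^2\lesssim\frac{1}{16}H^2$, whence $\ell(Z)\lesssim\frac18c_1^2+k$ --- not $\sim\frac12c_1^2$ as you assert. At the thresholds of (\ref{table}) this is tiny (for $k=1$, $c_1^2=8$ it gives $\ell(Z)\approx2$), so both the local-freeness lemma and any repaired stability count have far less genericity to exploit than you suppose; by contrast, the paper's unbalanced extension carries total length $\sum_i l(Z_i)=\frac72H^2+k$.
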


We will be modifying the argument of \cite[Theorem 5.1.6]{HL} for our purposes, but first we need the following lemma:
\begin{Lem}\label{CB} Let $L$ be a very ample divisor on a smooth projective surface $X$, and choose $1\leq n\leq h^0(X,L)$.  Then the locus $S\subset X^{[n]}$ parametrizing 0-dimensional subschemes $Z\subset X$ with length $l(Z)=n$ which fail to impose independent conditions on $H^0(X,L)$ is a divisor.  Moreover, its generic element consists of those $Z$ comprised of $n$ distinct points, $n-1$ of which impose independent conditions on $H^0(X,L)$.
\end{Lem}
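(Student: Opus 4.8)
The plan is to exhibit $S$ as the degeneracy locus of a natural morphism of vector bundles over $X^{[n]}$ and then analyze the generic point. Let $\Xi\subset X^{[n]}\times X$ be the universal length-$n$ subscheme with its two projections $p\colon\Xi\to X^{[n]}$ and $q\colon\Xi\to X$. First I would form the two locally free sheaves on $X^{[n]}$ governing the evaluation of sections of $L$ on a subscheme: on the one hand the trivial bundle $H^0(X,L)\otimes\OO_{X^{[n]}}$, and on the other the \emph{tautological bundle} $L^{[n]}:=p_*q^*L$, which is locally free of rank $n$ precisely because $p$ is finite flat of degree $n$ and each fiber $p_*q^*L|_{[Z]}=H^0(Z,L|_Z)$ has constant length $n$. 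Restriction of global sections induces an evaluation morphism
\begin{equation*}
\ev\colon H^0(X,L)\otimes\OO_{X^{[n]}}\longrightarrow L^{[n]},
\end{equation*}
and by definition $[Z]$ lies in $S$ exactly when $\ev_{[Z]}$ fails to be surjective, i.e. when $Z$ fails to impose $n$ independent conditions on $L$. Thus $S$ is the locus where $\ev$ drops rank, which is to say the zero locus of the induced map $\bigwedge^n\ev\colon \bigwedge^n\bigl(H^0(X,L)\otimes\OO\bigr)\to \det L^{[n]}$, or more invariantly the first degeneracy locus of a map from a trivial bundle onto a rank-$n$ bundle.

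The next step is to show that $S$ is a divisor, i.e. that it is nonempty of pure codimension one (and in particular a genuine proper subscheme, not all of $X^{[n]}$). Since $\ev$ is a surjection of sheaves onto the rank-$n$ bundle $L^{[n]}$ on the open locus where $Z$ imposes independent conditions, that locus is nonempty (for $n\leq h^0(L)$ and $L$ very ample a general union of $n$ distinct points imposes independent conditions, a standard fact for very ample $L$), so $S\neq X^{[n]}$. The degeneracy locus of a generically surjective morphism from a trivial bundle to a vector bundle of rank $n$ has expected codimension $h^0(L)-n+1-(n-n)+\cdots$; more simply, locally $S$ is cut out by the single equation $\det$ of an $n\times n$ minor matrix after trivializing, so once $S\neq X^{[n]}$ it is automatically of pure codimension one wherever it meets the smooth variety $X^{[n]}$. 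I would invoke the Eagon--Northcott/generic-determinantal codimension bound to guarantee every component has codimension one, together with nonemptiness, to conclude $S$ is a divisor.

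Finally, to identify the generic element I would argue that the general point of $S$ consists of $n$ distinct reduced points. The locus in $X^{[n]}$ of non-reduced or infinitely-near subschemes has codimension at least one and does not dominate $S$ once we check that $S$ meets the open locus $U\subset X^{[n]}$ of $n$ distinct points in a divisor of $U$: over $U$ the failure to impose independent conditions on a configuration of distinct points $\{x_1,\dots,x_n\}$ is governed by the linear dependence of the evaluation functionals $\mathrm{ev}_{x_1},\dots,\mathrm{ev}_{x_n}\in H^0(L)^\vee$, and the condition that $n-1$ of them be independent while all $n$ are dependent is exactly the condition that the $n$-th point lie on the hyperplane spanned by the images of the first $n-1$. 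This is a codimension-one condition on the last point and cuts out a dense open subset of $S\cap U$, so the generic element of $S$ is as claimed. The main obstacle I expect is the pure-codimension-one (equidimensionality) claim: ruling out the possibility that some component of the determinantal locus $S$ has codimension $\geq 2$ requires either the general position input that $L$ very ample gives enough sections to make $\ev$ generically surjective with the correct drop, or a direct dimension count of the incidence variety $\{(Z,s):s\in H^0(L),\ s|_Z=0\}$ fibered over $\P H^0(L)$; I would lean on the determinantal codimension bound combined with the explicit description over $U$ to pin the dimension down exactly.
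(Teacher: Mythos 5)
Your description of $S$ as the non-surjectivity locus of the evaluation map $\ev\colon H^0(X,L)\otimes\OO_{X^{[n]}}\to L^{[n]}$ is the correct formulation of the condition in the statement, but the step where you conclude codimension one is a genuine gap, and it cannot be repaired by determinantal generalities. Since $h^0(X,L)$ is in general strictly larger than $n$, the non-surjectivity locus is the common zero locus of all $\binom{h^0(X,L)}{n}$ maximal minors of an $n\times h^0(X,L)$ matrix; it is \emph{not} cut out locally by a single determinant, and its expected codimension is $h^0(X,L)-n+1$, not $1$. The Eagon--Northcott bound runs the wrong way for you: it gives $\codim S\le h^0(X,L)-n+1$, which does nothing to force $\codim S\le 1$. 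Moreover the failure is real, not a missing verification: take $X=\P^2$ and $L=\OO_{\P^2}(2)$, very ample with $h^0=6$. For $n=3$ every length-$3$ subscheme imposes independent conditions on conics ($\OO(2)$ is $2$-very ample), so $S=\varnothing$; for $n=4$ one checks that $S$ is exactly the locus of length-$4$ subschemes contained in a line, which has dimension $6$ inside the $8$-dimensional $(\P^2)^{[4]}$, i.e.\ codimension $2$. (So this is in fact a counterexample to the lemma as literally stated; any correct version needs extra input on $n$ or on $(X,L)$.) The same error recurs in your analysis of the generic element: $n-1$ general points of $X$ span only a $\P^{n-2}$ in $\P(H^0(X,L)^\vee)$, which is a hyperplane only in the extreme case $n=h^0(X,L)$; for smaller $n$, asking the $n$-th point of $X$ to lie on that span is a codimension-$(h^0(X,L)-n+1)$ condition whose intersection with the surface $X$ is generically finite or empty, not a curve, so it is not a codimension-one condition on the last point.

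It is worth seeing how this compares with the paper's own argument, because the two diverge exactly at this pressure point. The paper fixes an $n$-dimensional subspace $W\subset H^0(X,L)$ and considers the locus $D_W$ where $W\otimes\OO_{X^{[n]}}\to L^{[n]}$, now a map of bundles of \emph{equal} rank $n$, fails to be an isomorphism; this is the vanishing of a section of $\det L^{[n]}$, hence genuinely of codimension at most one. The price is that $D_W$ parametrizes failure to impose independent conditions on $W$ rather than on $H^0(X,L)$: one has only $S\subseteq D_W$ (indeed $S=\bigcap_W D_W$), and the $\P^2$ example shows the inclusion is strict in general, since there $S$ is empty or of codimension $2$ while every $D_W$ is a nonempty divisor. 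So your proposal works with the right locus but cannot reach codimension one, whereas the paper's equal-rank trick produces an honest divisor which is in general larger than the locus demanded by the statement (and larger than what the Cayley--Bacharach application requires). A complete argument must go beyond both: either restrict to $n=h^0(X,L)$, where the two approaches coincide and the lemma does hold, or exhibit geometrically a codimension-one family of subschemes failing to impose independent conditions, e.g.\ subschemes of suitable curves of low $L$-degree, and show it dominates $S$.
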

\begin{proof}  Indeed, choose a subspace $W\subset H^0(X,L)$ of dimension $n$.  Then the subscheme $S$ can be described as the locus where the natural map $$W\otimes \OO_{X^{[n]}}\rightarrow {\pi_1}_*(\pi_2^*\OO(L)\otimes \mathcal O_{\mathcal Z_n})$$ of vector bundles of rank $n$ fails to be an isomorphism, where $\mathcal Z_{n}\subset X^{[n]}\times X$ is the universal subscheme and $\pi_1,\pi_2$ are the respective projections.  Thus $S$ has codimension at most 1.  Since $L$ is very ample, the generic element of $X^{[n]}$ imposes independent linear conditions, so $S$ has codimension exactly 1. 

The last claim is clear.
\end{proof}

Finally, we must also recall the definition of the Cayley-Bacharach property:
\begin{Def} A pair $(L,Z)$ of a line bundle $L$ and a 0-dimensional subscheme $Z\subset X$ on a smooth projective variety $X$  is said to satisfy the \emph{Cayley-Bacharach property} if for every subscheme $Z'\subset Z$ with $l(Z')=l(Z)-1$ and every $s\in H^0(X,L)$ with $s|_{Z'}=0$, $s|_Z=0$ as well.  In other words, $Z$ fails to impose independent linear conditions on $H^0(X,L)$.
\end{Def}

Now we may prove the above theorem:

\begin{proof}(of Theorem \ref{four}) Let us consider extensions of the form $$0\rightarrow \mathcal O(-2H)\rightarrow E\rightarrow \bigoplus_{i=1}^3 I_{Z_i}(H)\rightarrow 0,$$ where the $Z_i$ are disjoint reduced subschemes of dimension zero.  Then, from the proof of \cite[Theorem 5.1.6]{HL}, we have that if the pair $(\mathcal O(3H+K_Y),Z_i)$ satisfies the Cayley-Bacharach property for each $i$, then a locally free extension as above exists.  To ensure that the Cayley-Bacharach property holds for each $Z_i$,  it follows from Lemma \ref{CB} that it suffices to let $Z_i$ be the generic element of the divisor $S_i\subset Y^{[l(Z_i)]}$ as described in the last statement of the lemma, where $l(Z_i)\leq h^0(3H+K_Y)=\frac{9}{2}H^2+1$.  Note that $3H+K_Y=3(H+K_Y)$ is indeed very ample \cite[Corollary 4.10.2]{CD}.  Moreover, for $E$ obtained as above to satisfy the required relation between $c_1(E)$ and $c_2(E)$, we must have $\frac{1}{2}H^2+k=c_2(E)=\sum_{i=1}^3 l(Z_i)-3H^2$.  It is easily checked that we can choose $l(Z_i)\leq \frac{9}{2}H^2+1$ and satisfying this equation.  

Suppose we have chosen $Z_i$ as above, and take a $\mu_H$-unstable locally free extension as above.  Let $E'=E(2H)$ (still unstable), and consider a destabilizing locally free sheaf $F\subset E'$ of rank $0<s<4$.  It suffices to consider locally free destabilizing subobjects since the reflexive hull of such objects will also destabilize and necessarily be locally free.  Since $\mu(F)\geq\mu(E')=\frac{9H^2}{4}>0$, $F$ must be contained in $\bigoplus_{i=1}^3 I_{Z_i}(3H)$, and passing to exterior powers gives a nonzero, and thus injective, homomorphism $$
\det(F)\otimes \mathcal O(-3sH)\rightarrow \bigoplus_{1\leq i_1<...<i_s\leq 3} I_{Z_{i_1}\cup ...\cup Z_{i_s}}.$$  Thus there is an effective divisor $D$ of degree $$\deg(D)=D.H=s\mu(\mathcal O(3H))-s\mu(F)\leq \frac{3s}{4}H^2,$$ which contains at least $s$ of the 3 subschemes $Z_i$.  Since the divisor is effective with bounded degree, there are only finitely many such divisor classes $[D]$.  For each such divisor class, the Hilbert scheme of linearly equivalent effective divisors in this class is $|D|:=\P(H^0(\mathcal O(D)))$ of dimension $\frac{1}{2}D^2$ if $D^2>0$, 1 if $D$ is an elliptic pencil, or 0 if $D$ is an elliptic half-pencil (both of these are when $D^2=0$).  

Let $\Hilb_{D,i_1,...,i_s}$ be the flag Hilbert scheme of pairs $[Z_{i_1}\cup ...\cup Z_{i_s} \subset D']$ where $D'\in |D|$ and contains $Z_{i_1}\cup ...\cup Z_{i_s}$ with $[Z_{i_j}]\in Y^{[l(Z_{i_j})]}$ for each $j$.  It comes equipped with two projections $p_1:\fhilb\to |D|$ and $p_2:\fhilb\to \prod_{j=1}^s Y^{[l(Z_{i_j})]}$ which associate to a flag $[Z_{i_1}\cup ...\cup Z_{i_s} \subset D']$ the effective divisor $D'$ or the $s$-tuple of 0-dimensional subschemes $(Z_{i_1},...,Z_{i_s})$, respectively. For each $D'\in|D|$, the fibre of the morphism $p_1:\fhilb\rightarrow |D|$ has dimension $\sum_{j=1}^s l(Z_{i_j})$, so $\dim \fhilb=\frac{1}{2}D^2+\sum_{j=1}^s l(Z_{i_j})$, $1+\sum_{j=1}^s l(Z_{i_j})$, or $\sum_{j=1}^s l(Z_{i_j})$, respectively.  Thus the image of $\fhilb$ under the natural projection $p_2:\fhilb\rightarrow\prod_{j=1}^s Y^{[l(Z_{i_j})]}$ has dimension at most $\frac{1}{2}D^2+\sum_{j=1}^s l(Z_{i_j}),1+\sum_{j=1}^s l(Z_{i_j})$, or $\sum_{j=1}^s l(Z_{i_j})$.  Now $(D.H)^2\leq \frac{9s^2}{16}(H^2)^2$, so by the Hodge Index Theorem $D^2H^2\leq (D.H)^2$ and thus $D^2\leq \frac{9s^2}{16}H^2$.  Thus the image of $\fhilb$ in $\prod_{j=1}^s Y^{[l(Z_{i_j})]}$ has dimension at most $\frac{9s^2}{32}H^2+\sum_{j=1}^s l(Z_{i_j})$, $1+\sum_{j=1}^s l(Z_{i_j})$, or $\sum_{j=1}^s l(Z_{i_j})$, respectively.  The dimension of $\prod_{j=1}^s Y^{[l(Z_{i_j})]}$ is $2\sum_{j=1}^s l(Z_{i_j})$.  

We would like to choose $(Z_{i_1},...,Z_{i_s})\in\prod_{j=1}^s S_j$ (to ensure the freeness of $E$) but not in the image of $\fhilb$ (to ensure its stable).  It suffiices to show that $\dim p_2(\fhilb)<2\sum_{j=1}^s l(Z_{i_j})-s$, i.e. $\frac{9s^2}{32}H^2+\sum_{j=1}^s l(Z_{i_j})<2\sum_{j=1}^s l(Z_{i_j})-s$ or equivalently $\frac{9s^2}{32}H^2<\sum_{j=1}^s l(Z_{i_j})-s$.  Suppose we choose all the lengths to be roughly equal, say $l(Z_1)=t,l(Z_2)=t-1,l(Z_3)=t-a$, where $t\leq \frac{9}{2}H^2+1$.  Then $3t-1-a=\sum_{i=1}^3 l(Z_i)=\frac{7}{2}H^2+k$, so $t=\frac{7}{6}H^2+\frac{k}{3}+\frac{a+1}{3}$.  Here $a$ is chosen as small as possible to make $t$ an integer, so it follows that $a\equiv 2+2k+H^2(\mod 3)$ and thus $a+1\leq 2+H^2$ if $k=1,-2,-5$, $a+1\leq H^2$ if $k=0,-3$, and $a+1\leq 1+H^2$ if $k=-1,-4$.

Now let us consider the three possibilities for $s$.  First, if $s=3$, then $\sum_{i=1}^3 l(Z_{i})=\frac{7}{2}H^2+k$, so we're requiring that $-\frac{31}{32}H^2<k-3$, which is always satisfied.  Indeed, this follows upon inspection of the minimum $H^2$ values from (\ref{table}).  

If $s=2$, then $\sum_{j=1}^2 l(Z_{i_j})=\frac{7}{2}H^2+k-l(Z_i)$, where $i\neq i_1,i_2$, so we're requiring that $-\frac{19}{8}H^2<k-2-l(Z_i)$.  By construction $l(Z_i)\leq t=\frac{7}{6}H^2+\frac{k}{3}+\frac{a+1}{3}$ so $k-2-l(Z_i)\geq -\frac{7}{6}H^2-\frac{k}{3}-\frac{a+1}{3}+k-2=-\frac{7}{6}H^2+\frac{2}{3}k-\frac{a+1}{3}-2$.  If $k=1,-2,-5$, then $-\frac{7}{6}H^2+\frac{2}{3}k-\frac{a+1}{3}-2\geq -\frac{9}{6}H^2+\frac{2}{3}k-\frac{8}{3}$.  Inspecting the table above, it follows that $k>4-\frac{21}{16}H^2$ for these values of $k$ and all possible values of $H^2$, and from this it follows that $-\frac{9}{6}H^2+\frac{2}{3}k-\frac{8}{3}>-\frac{19}{8}H^2$, as required.  For $k=0,3$, we have $k>3-\frac{21}{16}H^2$ for all possible values of $H^2$, and thus $-\frac{7}{6}H^2+\frac{2}{3}k-\frac{a+1}{3}-2\geq -\frac{9}{6}H^2+\frac{2}{3}k-2>-\frac{19}{8}H^2$.  Finally, if $k=-1,-4$, then $k>\frac{7}{2}-\frac{21}{16}H^2$ for all possible values of $H^2$, and it follows that $-\frac{7}{6}H^2+\frac{2}{3}k-\frac{a+1}{3}-2\geq -\frac{9}{6}H^2+\frac{2}{3}k-\frac{7}{3}>-\frac{19}{8}H^2$, as required.

For the last case, when $s=1$, we want $\frac{9}{32}H^2<l(Z_i)-1$.  Notice that for each $i$ $$l(Z_i)\geq t-a=\frac{7}{6}H^2+\frac{k}{3}-\frac{2}{3}a+\frac{1}{3}\geq \frac{1}{2}H^2+\frac{k}{3}+\left \{\begin{array}{l}  -1\mbox{ if }k\equiv 0(\mod 3)\\ -\frac{1}{3}\mbox{ if }k\equiv 1(\mod 3)\\\frac{1}{3}\mbox{ if }k\equiv 2(\mod 3)\end{array}\right\}.$$  It is easily checked from (\ref{table}) that this number is always greater than $\frac{9}{32}H^2+1$, as required.

It follows from all of the above that we can choose the subschemes $Z_i$ to ensure that the extension $E$ is locally free and $\mu_H$-stable, as required.
\end{proof}

\section{$M_{H,Y}(mv_0)\neq\varnothing$ for $v_0^2\geq -1$}
In this section, we prove the non-emptiness of moduli spaces of Gieseker-semistable sheaves with given Mukai vector on an unnodal Enriques surface with respect to a generic polarization.  We recall the most important consequence of this genericity assumption: the only sheaves with the same reduced Hilbert polynomial as a given sheaf of Mukai vector $v$ have Mukai vectors on the ray $\R_{>0}v$.  Let us set notation and denote by $v_0=(r,c_1,s)$ a primitive Mukai vector of positive rank with $v_0^2\geq -1$.  We further set $H$ to be a polarization that is generic with respect to $v_0$.  To prove our main theorem, completing the sequence of results initiated by Kim in \cite{Kim98,Kim,Kim06} and Hauzer in \cite{Hau10}, we first recall the definition of the virtual Hodge polynomial of a variety $X$ over $\C$.  The cohomology with compact support $H^*_c(X,\Q)$ has a natural mixed Hodge structure, and we define $e^{p,q}(X):=\sum_k (-1)^kh^{p,q}(H_c^k(X,\Q))$ and $e(X):=\sum_{p,q}e^{p,q}(X)x^py^q$ to be the virtual Hodge number and virtual Hodge polynomial, respectively.  Our main theorem is as follows:

\begin{Thm}\label{existence} For generic polarization $H$, the moduli space of Gieseker-semistable sheaves of Mukai vector $v=mv_0$ with respect to $H$ on an unnodal Enriques surface $Y$, $M_{H,Y}(v)$, is non-empty.
\end{Thm}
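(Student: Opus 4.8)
The plan is to reduce the statement to inputs already available—Yoshioka's identity (Theorem \ref{yosh odd}), Hauzer's reduction (Theorem \ref{hauz even}), Kim's rank-two construction \cite{Kim06}, and the rank-four existence result (Theorem \ref{four})—gluing them by two soft facts: $\mu$-stability is open in the polarization, and for primitive $v_0$ the virtual Hodge polynomial $e(M_{H,Y}(v_0,L))$ does not depend on the generic polarization $H$. First I would dispose of $m>1$: direct sums preserve Gieseker-semistability, so if $E$ is semistable with $v(E)=v_0$ then $E^{\oplus m}$ is semistable with Mukai vector $mv_0=v$, and it suffices to treat $v=v_0$ primitive. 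For primitive $v_0$ and generic $H$ every semistable sheaf is stable, so $M_{H,Y}(v_0,L)=M^s_{H,Y}(v_0,L)$ is smooth and projective; its Hodge polynomial has constant term equal to its number of connected components, whence $M_{H,Y}(v_0,L)\neq\varnothing$ if and only if $e(M_{H,Y}(v_0,L))\neq 0$. So it is enough to establish this nonvanishing for a single generic $H$.

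I would then argue by the parity of $r=r(v_0)$. If $r$ is odd, $v_0^2$ is odd and Theorem \ref{yosh odd} gives $e(M_{H,Y}(v_0,L))=e(Y^{[(v_0^2+1)/2]})$, which is nonzero precisely when $v_0^2\geq -1$; this settles the odd case. If $r$ is even, $v_0^2$ is even, so $v_0^2\geq -1$ means $v_0^2\geq 0$, and Theorem \ref{hauz even} gives $e(M_{H,Y}(v_0,L))=e(M_{H,Y}(w,L'))$ for a primitive $w$ of rank $2$ or $4$ with $w^2=v_0^2$ (the squares agree because Hauzer's reduction runs through Mukai-lattice isometries). The problem is thereby reduced to non-emptiness of $M_{H,Y}(w)$ for generic $H$ in ranks two and four.

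In rank two this is \cite{Kim06}, which gives $M_{H,Y}(w)\neq\varnothing$ whenever $w^2\geq 1$, i.e.\ $w^2\geq 2$. In rank four I would apply Theorem \ref{reduction}: twisting by a suitable divisor $D$ replaces $w$ by $w'=w\cdot e^{D}=(4,c_1'',2-k)$ with $k\in\{-5,\dots,1\}$ and $c_1''$ effective. Since the twist is an isometry, $(c_1'')^2=w^2+16-8k=v_0^2+(16-8k)$, and $16-8k$ is exactly the lower bound for $c_1^2$ recorded in (\ref{table}) for this $k$; because $v_0^2\geq 0$, this shows simultaneously that the table inequality holds and that $(c_1'')^2>0$, so the effective divisor $c_1''$ is ample on the unnodal surface $Y$. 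Theorem \ref{four} then produces a $\mu_{c_1''}$-stable vector bundle $E'$ with $v(E')=w'$. As $\mu$-stability is open in the polarization, $E'$ remains $\mu_{H_0}$-stable, hence $H_0$-Gieseker-stable, for some $H_0$ close to $c_1''$ that is generic for $w'$; thus $M_{H_0,Y}(w')\neq\varnothing$, and being smooth projective it has nonzero Hodge polynomial. Polarization-independence then yields $e(M_{H,Y}(w',L''))\neq 0$ for all generic $H$, and the isomorphism $E\mapsto E(D)$ between $M_{H,Y}(w)$ and $M_{H,Y}(w')$ transports this to $w$, so that via Theorem \ref{hauz even} we obtain $e(M_{H,Y}(v_0,L))\neq 0$.

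The genuine obstacle, I expect, is the passage from Theorem \ref{four} to a \emph{generic}-polarization statement: that theorem yields stability only for the single, highly non-generic polarization $H=c_1''$, and upgrading this to non-emptiness for every generic $H$ is exactly what forces the combination of openness of $\mu$-stability with polarization-independence of $e$. A second delicate point is the boundary value $v_0^2=0$: there the rank-four inequality in (\ref{table}) holds only with equality—so Theorem \ref{four} still applies—but the rank-two input of \cite{Kim06} covers only $w^2\geq 2$, so the residual case $r=2$, $w^2=0$ must be handled separately by directly exhibiting one Gieseker-semistable rank-two sheaf of the required Mukai vector.
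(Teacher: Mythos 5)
Your overall route is the paper's route: reduce to $m=1$ via direct sums, quote Theorem \ref{yosh odd} for odd rank, use Theorem \ref{hauz even} to reduce even rank to ranks $2$ and $4$, invoke \cite{Kim06} in rank $2$ and Theorems \ref{reduction} and \ref{four} in rank $4$, and bridge the non-generic polarization $H=c_1''$ to a generic one by openness of $\mu$-stability together with independence of the virtual Hodge polynomial on the generic polarization (the paper phrases this bridge as the wall-crossing statement of \cite[Section 4.C]{HL}, but it is the same mechanism). Your explicit verification that the inequality in (\ref{table}) is equivalent to $w'^2\geq 0$, via $(c_1'')^2=w^2+16-8k$ and the isometry property of twisting, is correct and is a point the paper leaves implicit.

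However, there is a genuine gap, and it is exactly the case you flag at the end and then do not prove: even rank with $v_0^2=0$ reducing to rank two with $w^2=0$. Under your reading of \cite{Kim06} (non-emptiness for $w^2\geq 2$) this case is uncovered, and "directly exhibiting one Gieseker-semistable rank-two sheaf of the required Mukai vector" is not a routine verification --- it is the content of a published theorem. The paper closes this hole in two steps: first it applies the main theorem of \cite{Kim98} (the rank-two analogue of Theorem \ref{reduction}) to normalize the Mukai vector to $s=1-k$ with $k\in\{0,1\}$ and $c_1$ ample; then \cite{Kim06} covers $k=0$ with $v^2\geq 0$ and $k=1$ with $v^2>0$, so the only surviving case is $v=(2,c_1,0)$ with $c_1^2=0$, which is precisely \cite[Theorem 0.1]{Hau10}. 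Without invoking that theorem of Hauzer (or supplying an equivalent construction), your argument does not prove non-emptiness for, e.g., primitive $v_0=(2,c_1,0)$ with $c_1^2=0$, so the boundary case $v_0^2=0$ of the statement remains open in your proposal. A secondary, smaller inaccuracy in the same direction: the rank-two existence results of \cite{Kim06} are themselves stated for ample $c_1$ in the normalized form, so even for $w^2\geq 2$ you need the \cite{Kim98} twisting reduction and the same wall-crossing bridge you set up for rank four, rather than citing \cite{Kim06} as an off-the-shelf statement about arbitrary generic polarizations.
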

\begin{proof}  First note that it suffices to prove the result for $m=1$, i.e. $v$ primitive.  Indeed, for any $E_0\in M_{H,Y}(v_0)$, $E^{\oplus m}$ is semistable of Mukai vector $v=mv_0$.  So we may in-fact assume that any Gieseker-semistable sheaf is stable as well.

We can further restrict ourselves by recalling that non-emptiness was proven in \cite[Theorem 4.6]{Yos03} for $r$ odd.  

So we are reduced to proving the theorem when $r$ is even and positive.  Then \cite[Theorem 2.8]{Hau10} shows that the virtual Hodge polynomial of $M_{H,Y}(v)$ is equal to the virtual Hodge polynomial of a moduli space of sheaves on $Y$ with primitive Mukai vector of rank 2 or 4 that are Gieseker-stable with respect to $H$.  

In the rank 2 case, we may use the main theorem of \cite{Kim98} to reduce to the case $s=1+\frac{1}{2}c_1^2-c_2=1-k$, where $k=0,1$.  The existence of rank 2 $\mu_{c_1}$-stable vector bundles with Mukai vector $v$ is proven in \cite{Kim06} for $v^2>0$ and $k=1$, $v^2\geq 0$ and $k=0$, respectively, where these numerical restrictions guarantee the ampleness of $c_1$.  If $c_1$ is not generic with respect to $v$, it must lie on a wall.  But then $\mu_{c_1}$-stable sheaves remain $\mu_H$-stable for $H$ generic on one side of the wall (see \cite[Section 4.C]{HL}), hence the claim.  The remaining case in rank 2 is for $v^2=0$ and $k=1$, i.e. $v=(2,c_1,0)$.  This is the content of \cite[Theorem 0.1]{Hau10}.

In the rank 4 case, we may use Theorem \ref{reduction} above to reduce to the case $s=2+\frac{1}{2}c_1^2-c_2=2-k$, where now $k=-5,...,1$.  Then Theorem \ref{four} shows the existence of $\mu_{c_1}$-stable vector bundles of rank 4 and Mukai vector $v$.  Repeating the argument above if $c_1$ is not generic, we deduce the non-emptiness in this case as well and with it the theorem.
\end{proof}
\section{The Existence of Stable Objects}
Having shown in the previous section that for $v=mv_0$ with $v_0$ primitive, $v_0^2\geq -1$, and $H$ generic with respect to $v$, $M_{H,Y}(v)\neq\varnothing$, we turn now to the question of the non-emptiness of $M^s_{H,Y}(v)$, the open subset parametrizing stable sheaves.  As noted above, $M_{H,Y}(v)=M^s_{H,Y}(v)$ in case $m=1$, and for $m>1$, any destabilizing subsheaf of a sheaf in $M_{H,Y}(v)$ must have Mukai vector $m'v_0$ for $m'<m$.  We generalize standard arguments to show the following result:
\begin{Thm}\label{num dim} Let $v=mv_0$ be a Mukai vector with $v_0$ primitive and $m>0$ with $H$ generic with respect to $v$.  
\begin{enumerate}
\item The moduli space of Gieseker-semistable sheaves $M_{H,Y}(v)\neq \varnothing$ if and only if $v_0^2\geq -1$.
\item Either $\dim M_{H,Y}(v)=v^2+1$ and $M^s_{H,Y}(v)\neq\varnothing$,  or $m>1$ and $v_0^2\leq 0$.
\item If $M_{H,Y}(v)\neq M^s_{H,Y}(v)$ and $M^s_{H,Y}(v)\neq\varnothing$, the codimension of the semistable locus is at least 2 if and only if $v_0^2>1$ or $m>2$.  Moreover, in this case $M_{H,Y}(v)$ is normal with torsion canonical divisor.
\end{enumerate}
\end{Thm}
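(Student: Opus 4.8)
The plan is to dispatch the three parts in order, reducing everything to the primitive case (Theorem \ref{existence}) together with deformation theory and a stratification of the strictly semistable locus.

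For part (1), the implication $v_0^2\ge -1\Rightarrow M_{H,Y}(v)\neq\varnothing$ is precisely Theorem \ref{existence}, so the content is the converse. Suppose $M_{H,Y}(v)\neq\varnothing$ and pick a Jordan--H\"older factor $F$ of some semistable sheaf; it is stable with $v(F)=m'v_0$ for some $1\le m'\le m$, hence Schur, and the estimate recalled just before Theorem \ref{Kim} gives
$$0\le \ext^1(F,F)=(m')^2 v_0^2+1+\hom(F,F(K_Y)).$$
If $F\not\cong F(K_Y)$ then $\hom(F,F(K_Y))=0$, so $(m')^2v_0^2\ge -1$ and $v_0^2\ge -1/(m')^2\ge -1$. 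If $F\cong F(K_Y)$ then, $Y$ being unnodal, $v(F)$ is divisible by $2$, so $m'\ge 2$ is even and $(m')^2v_0^2\ge -2$ forces $v_0^2\ge -2/(m')^2\ge -\tfrac12$. In either case $v_0^2\ge -1$, and since $v_0^2$ is even (the Enriques lattice being even) this is equivalent to $v_0^2\ge 0$.

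For part (2), if $m=1$ then $v$ is primitive, $M=M^s$ is smooth of dimension $v^2+1$, and nonempty by (1). If $m>1$ and $v_0^2>0$ I would produce stable sheaves and pin down the dimension simultaneously by a local analysis at a polystable point. Inducting on $m$, choose general stable $F_1,F_2$ with $v(F_1)=(m-1)v_0$, $v(F_2)=v_0$; for generic choices $\hom(F_i,F_j)=\ext^2(F_i,F_j)=0$ for $i\neq j$, so $\ext^1(F_1,F_2)=\ext^1(F_2,F_1)=(m-1)v_0^2>0$. The analytic-local model of $M$ at $[G]$, $G=F_1\oplus F_2$, is the GIT quotient of the obstruction locus in $\Ext^1(G,G)$ by $\Aut(G)=\G_m^2$; the residual $\G_m$ acts with opposite weights on the summands $\Ext^1(F_1,F_2)\oplus\Ext^1(F_2,F_1)$, and since both are nonzero there are stable points, corresponding to stable deformations of $G$, whence $M^s\neq\varnothing$. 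A direct count gives $\dim_{[G]}M=m^2v_0^2+1=v^2+1$; combined with the upper bound $\dim_E M\le v^2+1+\hom(E,E(K_Y))$ at stable points and the Theorem \ref{Kim} bound on the locus where $E\cong E(K_Y)$, no component exceeds $v^2+1$, so $\dim M=v^2+1$. The alternative $m>1$, $v_0^2=0$ is exactly the exceptional case, where $M^s$ may be empty, and is the main obstacle below.

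For part (3) I would stratify $M\setminus M^s$ by the partition type $m=\sum_j m_j$ of the Jordan--H\"older factors: the corresponding stratum is the image of $\prod_j M^s_{H,Y}(m_jv_0)$ and, by (2), has dimension $\big(\sum_j m_j^2\big)v_0^2+\#\{\text{parts}\}$. For $v_0^2\ge 2$ this is maximized by the partition $(m-1,1)$, of dimension $((m-1)^2+1)v_0^2+2$, so that
$$\codim(M\setminus M^s)=2(m-1)v_0^2-1\ge 3.$$
As $v_0^2$ is even and $m\ge 2$ whenever $M\neq M^s$, the inequality $\codim\ge 2$ can therefore fail only when $v_0^2=0$ and $m=2$, which is precisely the negation of ``$v_0^2>1$ or $m>2$''. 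The square-zero case, both the drop to codimension one for $m=2$ and the validity of $\codim\ge 2$ for $m\ge 3$, I would verify from the explicit description of $M_{H,Y}(mv_0)$ for $v^2=0$ developed in the final section.

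Finally, assuming $\codim(M\setminus M^s)\ge 2$, the singular locus of $M^s$ is the image of the sheaves with $E\cong E(K_Y)$, of dimension at most $\tfrac12(v^2+4)$ by Theorem \ref{Kim}, hence of codimension $\ge 2$ in the cases at hand; together with $\codim(M\setminus M^s)\ge 2$ this makes $M$ regular in codimension one. Being locally modeled on the Cohen--Macaulay GIT quotients above, $M$ is $S_2$, so Serre's criterion yields normality, and since the canonical class of $M^s$ is torsion by \cite{Yamada}, it extends uniquely to a torsion canonical divisor on the normal variety $M$. I expect the genuine difficulty to be the existence-and-dimension statement in part (2), namely controlling the obstruction so that the local GIT model has the expected dimension $v^2+1$ and really contains stable points, together with the delicate square-zero analysis underlying the borderline of part (3).
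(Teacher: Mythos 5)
Your skeleton (reduce to the primitive case via Theorem \ref{existence}, stratify the strictly semistable locus by partitions, count dimensions) matches the paper, and your part (1) is essentially the paper's argument. But a genuine error runs through the proposal: the claim that $v_0^2$ is even. The Mukai lattice of an Enriques surface is \emph{not} even, because of the summand $\tfrac{1}{2}\Z\rho_Y$: for $v=(r,c_1,s)$ one has $v^2=c_1^2-2rs$ where $2s$ may be odd, so e.g.\ $v(\OO_Y)=(1,0,\tfrac{1}{2})$ has square $-1$ and the ideal sheaf of a point has $v=(1,0,-\tfrac{1}{2})$ with square $1$. In part (1) this makes your closing remark (``$v_0^2\ge -1$ is equivalent to $v_0^2\ge 0$'') false: $\OO_Y$ is stable with $v^2=-1$ and non-empty moduli space. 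More seriously it breaks part (3): your codimension formula $2(m-1)v_0^2-1$ is correct, but the borderline case where the codimension drops to $1$ is $v_0^2=1$, $m=2$ (take $v_0=(1,0,-\tfrac{1}{2})$: then $M_{H,Y}(2v_0)$ is $5$-dimensional and the strictly semistable locus is the image of $Y\times Y$, of codimension $1$), not your claimed $v_0^2=0$, $m=2$. The case $v_0^2=0$, $m\ge 2$ is in fact vacuous under the hypotheses of (3), since stable sheaves exist only for $m=1$ when $v_0^2\le 0$; so deferring that case to the final section (which moreover treats only primitive $v$ with $v^2=0$) is off target. Once one notes that the hypotheses force $v_0^2\ge 1$ and $m\ge 2$, the equivalence ``codimension $\ge 2$ iff $v_0^2>1$ or $m>2$'' falls out of your formula; as written, your case analysis does not establish it.

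Your part (2) also takes a genuinely different route. The paper constructs an explicit non-split extension $0\to E\to E'\to F\to 0$ with $E\in M^s_{H,Y}((m-1)v_0)$ and $F\in M_{H,Y}(v_0)$, shows $E'$ is Schur, and (choosing $E\ncong E\otimes\OO(K_Y)$) that $\Hom(E',E'\otimes\OO(K_Y))=0$; then $\dim_{E'}M\ge v^2+1$ together with the strict dimension inequality for the strictly semistable locus forces the component through $E'$ to contain stable points, and gives smoothness of dimension $v^2+1$ at $E'$. You instead invoke a Luna/Kuranishi local model at the polystable point $G=F_1\oplus F_2$ and assert that points with nonzero components in both $\Ext^1(F_1,F_2)$ and $\Ext^1(F_2,F_1)$ yield stable deformations. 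That sketch has a real gap: you must show that the zero locus of the (a priori nonvanishing) obstruction map actually contains such points, and that under the local isomorphism they parametrize stable sheaves rather than merely simple ones; neither is automatic, and the paper's extension argument sidesteps this machinery entirely. Similarly, for normality you assert the local models are Cohen--Macaulay, whereas the paper's (cheaper) observation is that singular points of $M^s$ have $\ext^2(E,E)=1$, hence are hypersurface singularities, which gives $S_2$ directly and, with the codimension bounds, normality and torsion canonical class via \cite[Proposition 8.3.1]{HL}.
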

\begin{proof} If $v_0^2\geq -1$, then part (a) follows from Theorem \ref{existence} above.  For the converse, note that any stable factor of an element of $M_{H,Y}(v)\neq \varnothing$ would have to have Mukai vector $m'v_0$ for $m'<m$ by genericity of $H$.  But then $m'^2v_0^2=(m'v_0)^2\geq -1$, so $v_0^2\geq -1$.

For (b), again notice that the genericity of $H$ means that any stable factor of an object of $M_{H,Y}(v)$ must have Mukai vector $m'v_0$ for $m'<m$, which implies that the strictly semistable locus is the image of the natural map $$\SSL: \coprod_{m_1+m_2=m,m_i>0} M_{H,Y}(m_1v_0)\times M_{H,Y}(m_2v_0)\rightarrow M_{H,Y}(v).$$  Assume $v_0^2>0$.  Then for $m=1$, $M_{H,Y}(v)=M^s_{H,Y}(v)$, and we have noted already that $\dim M_{H,Y}(v)=v^2+1$.  If $m>1$, then by induction, we deduce that the image of the map $\SSL$ has dimension equal to the maximum of $(m_1^2+m_2^2)v_0^2+2$ for $m_1+m_2=m,m_i>0$.  This is strictly less than $v^2+1$.

Furthermore, we can construct a semistable sheaf $E'$ with Mukai vector $v$ which is also Schur, i.e. $\Hom(E',E')=\C$.  By the inductive assumption, we can consider $E\in M_{H,Y}^s((m-1)v_0)$, and let $F\in M_{H,Y}(v_0)$.    Now $\chi(F,E)=-(v(F),v(E))=-(m-1)v_0^2<0$, so $\text{Ext}^1(F,E)\neq 0$.  Take $E'$ to be a nontrivial extension $$0\rightarrow E\rightarrow E'\rightarrow F\rightarrow 0.$$  Then any endomorphism of $E'$ gives rise to a homomorphism $E\rightarrow F$, of which there are none since these are both stable of the same phase and have different Mukai vectors (or can be chosen to be non-isomorphic if $m=2$).  Thus any endomorphism of $E'$ induces an endomorphism of $E$, and the kernel of this induced map $\Hom(E',E')\rightarrow \Hom(E,E)=\C$ is precisely $\Hom(F,E')$, which vanishes since the extension is non-trivial.  Thus $\Hom(E',E')=\C$.

We can deduce non-emptiness of $M^s_{H,Y}(v)$ from a dimension estimate as follows.  Since $E'$ is Schur, we get $$v^2+1\leq \dim_{E'} M_{H,Y}(v)\leq \dim T_{E'}M_{H,Y}(v)=v^2+1+\hom(E',E'\otimes\OO(K_Y)).$$  As we mentioned above, the strictly semistable locus must have dimension smaller than $v^2+1$.  So even though $E'$ is not stable, it lies on a component which must contain stable objects.  Moreover, as noted in \cite{Kim}, (smooth) components of the stable locus of dimension greater than $v^2+1$ can occur only if $v_0^2=0$, so $v_0^2>0$ implies that the locus of points fixed by $-\otimes\OO(K_Y)$ has positive codimension.  Then we may choose $E\in M_{H,Y}^s((m-1)v_0)$ so that $E\ncong E\otimes\OO(K_Y)$.  Stability of $E$ and $F$ and a diagram chase then show that $\Hom(E',E'\otimes\OO(K_Y))=0$, so $M_{H,Y}(v)$ is smooth at $E'$ of dimension $v^2+1$ as claimed.

Furthermore, observe that the strictly semistable locus has codimension $$v^2+1-(m_1^2v_0^2+m_2^2v_0^2+2)=(m_1+m_2)^2v_0^2+1-(m_1^2v_0^2+m_2^2v_0^2+2)=2m_1m_2v_0^2-1\geq 2,$$ if $v_0^2>1$ or $m>2$, hence the first part of (c).  For the second part of (c), notice that the singularities of $M^s_{H,Y}(v)$, i.e. where $\ext^2(E,E)=1$, are all hypersurface singularities, so normality follows from the dimension estimates in Theorem \ref{Kim} and the large codimension of the strictly semistable locus.  $K$-triviality follows from these considerations and the proof of \cite[Proposition 8.3.1]{HL}.

If $ v_0^2\leq 0$, then it is easily seen that stable sheaves occur only if $m=1$ and $\dim M_{H,Y}(v)=v^2+1$ in this case, while $$\dim M_{H,Y}(v)= \left \{\begin{array}{l}  0\mbox{ if }v_0^2=-1\\ m\mbox{ if }v_0^2=0\end{array}\right\}\neq v^2+1,$$ if $m>1$.
\end{proof}

\section{Irreducibility}

As noted in the introduction, irreducibility of Gieseker moduli spaces on unnodal Enriques surfaces remains an open question.  Yoshioka has resolved this for primitive $v$ of odd rank in \cite{Yos03} by showing that $M_{H,Y}(v)$ has two irreducible components distinguished by their determinant line bundles which differ by $K_Y$.  In this paper, we restrict ourselves to proving the following result:
\begin{Thm} Let $v$ be a primitive Mukai vector on an unnodal Enriques surface $Y$ with $v^2=0$.  Then $M_{H,Y}(v,L)$ is irreducible, where $L$ is one of the two lines bundles with $c_1(L)=c_1(v)$.
\end{Thm}
\begin{proof} We follow the idea of \cite[Theorem 6.1.8]{HL}.  Since $v$ is primitive and $H$ generic, any $H$ Gieseker-semistable sheaf is stable, so $M:=M_{H,Y}(v)$ is projective and smooth, and any connected component is thus irreducible and smooth.  

Let $M_1$ be one of the connected components of $M$, and fix a quasi-universal family $\mathcal E$ of similitude $s=\rk(\mathcal E)/r$ over $M_1\times Y$ with projections $p:M_1\times Y\to M_1,q:M_1\times Y\to Y$.  Denote by $M_1(K_Y)$ the (possibly disjoint) connected component consisting of objects $F(K_Y)$ for $F\in M_1$.  Let $[F]\in M$ be an arbitrary point in the moduli space.  For any $t\in M_1$ we have $$\Hom(F,\mathcal E_t)=\left \{\begin{array}{l}  0\mbox{ if }F^{\oplus s}\ncong \mathcal E_t\\ \C^s\mbox{ if }F^{\oplus s}\cong \mathcal E_t\end{array}\right\}$$ and $$\Ext^2(F,\mathcal E_t)\cong\Hom(\mathcal E_t,F(K_Y))^{\vee}=\left \{\begin{array}{l}  0\mbox{ if }F(K_Y)^{\oplus s}\ncong \mathcal E_t\\ \C^s\mbox{ if }F(K_Y)^{\oplus s}\cong \mathcal E_t\end{array}\right\}.$$  Since $\chi(F,\mathcal E_t)=s\chi(F,F)=-sv^2=0$, we also have $$\Ext^1(F,\mathcal E_t)=\left \{\begin{array}{l}  0\mbox{ if }F^{\oplus s},F(K_Y)^{\oplus s}\ncong \mathcal E_t\\ \C^s\mbox{ if }F^{\oplus s}\cong \mathcal E_t\\ \C^s\mbox{ if }F(K_Y)^{\oplus s}\cong \mathcal E_t\end{array}\right\},$$ where we note that since $v$ is primitive $F\ncong F(K_Y)$.  Thus if $[F]\notin M_1$ or $M_1(K_Y)$, then $\Ext^i(F,\mathcal E_t)=0$ for all $i$ and all $t\in M_1$, so $\lExt_p^i(q^*F,\mathcal E)=0$ for all $i$.

For $[F]\in M_1$ or $M_1(K_Y)$, we have to work a little harder.  Suppose first that $M_1=M_1(K_Y)$.  By \cite{BPS} there exists a complex $\mathcal P^{\bullet}$ of locally free sheaves $\mathcal P^i$ of finite rank such that the $i$-th cohomology $\mathcal H^i(\mathcal P^{\bullet})\cong \lExt_p^i(q^*F,\mathcal E)$ and $\mathcal H^i(\mathcal P^{\bullet}_t)\cong \Ext^i(F,\mathcal E_t)$.  Moreover, this complex is bounded from above and we may assume $\mathcal P^i=0$ for $i<0$.  Since $\Ext^i(F,\mathcal E_t)=0$ for $i>2$, the complex is exact at $\mathcal P^i$ for $i>2$.  Of course, $\lExt_p^0(q^*F,\mathcal E)$ is a skyscraper sheaf concentrated at $t_0=[F]$.  Since it's a subsheaf of the locally free sheaf $\mathcal P^0$, it must be 0.  Furthermore, $\ker(d^i)$ is always locally free as the kernel of a surjection from the locally free $\PP^i$ to the torsion-free sheaf $\im(d^i)$ on the smooth curve $M_1$.  As $\mathcal P^{\bullet}$ is exact at $\mathcal P^i$ for $i>2$ and $\mathcal P^i=0$ for large $i$, we can work backwards using exactness and  replace $\mathcal P^2$ by $\ker(d^2)$ to get $$0\rightarrow \im(d^1)\rightarrow \mathcal P^2\rightarrow \mathcal H^2(\mathcal P^{\bullet})\rightarrow 0.$$  It follows that $$\mathcal H^2(\mathcal P^{\bullet})(t)=\mathcal P^2(t)/\im(d^1)(t)\cong \mathcal H^2(\mathcal P^{\bullet}(t))\cong \text{Ext}^2(F,\mathcal E_t)$$ for any closed point $t$.  Thus $\lExt_p^2(q^*F,\mathcal E)$ is a torsion sheaf concentrated at $t_1=[F(K)]$ with length $s$.  Finally, we see that $\lExt^1_p(q^*F,\EE)$ is a torsion sheaf concentrated at $t_0$ and $t_1$ with length $s$ at each point.

If instead $M_1\neq M_1(K_Y)$, then the same considerations show that $\lExt^0_p(q^*F,\EE)=0$ in any case, and $\lExt^2_p(q^*F,\EE)=0$ and $\lExt^1_p(q^*F,\EE)=\C(t_0)^{\oplus s}$ if $F\in M_1$ and $\lExt^2_p(q^*F,\EE)=\C(t_0)^{\oplus s}$ and $\lExt^1_p(q^*F,\EE)=\C(t_0)^{\oplus s}$ if $F\in M_1(K_Y)$, respectively.

By the Grothendieck-Riemann-Roch formula, $$a:=ch([\lExt_p^0(q^*F,\mathcal E)]-[\lExt_p^1(q^*F,\mathcal E)]+[\lExt_p^2(q^*F,\mathcal E)])$$ depends only only $\ch(F)$ and $\ch(\mathcal E)$.  Since $\ch(F)$ is constant for all $[F]\in M$, so is $a$.  For $F\notin M_1$ or $M_1(K_Y)$, we get $a=0$.  Likewise, if $M_1\neq M_1(K_Y)$, then we get $a=0$ if $F\in M_1(K_Y)$.  If $F\in M_1$, then $a=-ch(\C(t_0))$ in either case, and $0\neq -\chi(\C(t_0))=\langle a\cdot td(M),[M_1]\rangle$, which is a contradiction unless $M$ is irreducible and equal to $M_1$.
\end{proof}

\begin{Rem} Note that the above proof shows that if $E\in M_{H,Y}(v,L)$ with $v$ as above, then $M_{H,Y}(v,L(K_Y))=\varnothing$. Moreover, we see that tensoring with $K_Y$ acts as an involution on the single irreducible component of $M_{H,Y}(v)$.  We believe this to be the case in general for primitive $v$ of even rank.
\end{Rem}


\begin{thebibliography}{999999}

\bibitem[Bea83]{Bea83} A. Beauville, "Vari\'{e}t\'{e}s K\"{a}hleriennes dont la premi\`{e}re classe de Chern est nulle," \emph{J. Differential Geom.}, 18(4):755-782, 1983.

\bibitem[BPS80]{BPS} C. Banica, M. Putinar, and G. Schumacher, ``Variation der globalen Ext in Deformationen kompakter komplexer R\"{a}ume," \emph{Math. Ann.}, 250(2):135-155, 1980.  

\bibitem[CD89]{CD} F. Cossec, I. Dolgachev, \emph{Enriques Surfaces I}, volume 76 of \emph{Progress in Mathematics.} Birkh\"{a}user Boston, Inc., Boston, 1989.

\bibitem[DH98]{DH98} I. Dolgachev and Y. Hu, ``Variation of geometric invariant theory quotients," \emph{Inst. Hautes \'{E}tudes Sci. Publ. Math.}, 87(87):5-56, 1998. With an appendix by Nicolas Ressayre. arXiv:math/9402008.

\bibitem[Hau10]{Hau10} M. Hauzer, ``On moduli spaces of semistable sheaves on Enriques surfaces," \emph{Ann. Polon. Math.}, 99(3):305-321, 2010. arXiv: 1003.5857.

\bibitem[HL10]{HL} D. Huybrechts and M. Lehn, \emph{The Geometry of Moduli Sapces of Sheaves}, Cambridge Mathematical Library. Cambridge University Press, Cambridge, second edition, 2010.

\bibitem[Kim98a]{Kim98} H. Kim, ``Moduli spaces of bundles mod Picard groups on some elliptic surfaces," \emph{Bull. Korean Math. Soc.}, 35(1):119-125, 1998.

\bibitem[Kim98b]{Kim} H. Kim, ``Moduli spaces of stable vector bundles on Enriques surfaces," \emph{Nagoya Math. J.}, 150:85-94, 1998.

\bibitem[Kim06]{Kim06} H. Kim, ``Stable vector bundles of rank two on Enriques Surfaces," \emph{J. Korean Math. Soc.}, 43(4):765-782, 2006.

\bibitem[Muk84]{Muk84} S. Mukai, ``Symplectic structure of the moduli space of sheaves on an abelian or K3 surface," \emph{Invent. Math.}, 77(1):101-116, 1984.

\bibitem[Muk87]{Muk87} S. Mukai, ``On the moduli space of bundles on K3 surfaces I," in \emph{Vector bundles on algebraic varieties (Bombay, 1984)}, volume 11 of \emph{Tate Inst. Fund. Res. Stud. Math.}, 341-413. Tata Inst. Fund. Res., Bombay, 1987.

\bibitem[MW97]{MW97} K. Matsuki and R. Wentworth, ``Mumford-Thaddeus principle on the moduli space of vector bundles on an algebraic surface," \textit{Internat. J. Math.}, 8(1):97-148, 1997. arXiv:alg-geom/9410016.

\bibitem[Sac12]{Sac} G. Sacc\`{a}, ``Relative compactified Jacobians of linear systems on Enriques surfaces," 2012. arXiv:math/1210.7519. 

\bibitem[Tha96]{Tha96} M. Thaddeus, ``Geometric invariant theory and flips," \emph{J. Amer. Math. Soc.}, 9(3):691-723, 1996. arXiv:math/9405004.

\bibitem[Yam13]{Yamada} K. Yamada, ``Singularities and Kodaira dimension of moduli scheme of stable sheaves on Enriques surfaces," \emph{Kyoto J. Math.}, 53(1):145-153, 2013.

\bibitem[Yos03]{Yos03} K. Yoshioka, ``Twisted stability and Fourier-Mukai transform I," \emph{Compositio Math.}, 138(3):261-288, 2003. 

 
\end{thebibliography}
\end{document}